% -*- TeX:UK -*-
% ----------------------------------------------------------------
% AMS-LaTeX Paper ************************************************
% **** -----------------------------------------------------------

\documentclass[a4paper,11pt,reqno]{amsart}
\usepackage[cyr]{aeguill}
\usepackage[pdfdisplaydoctitle=true,
      colorlinks=true,
      urlcolor=blue,
      citecolor=blue,
      linkcolor=blue,
      pdfstartview=FitH,
      pdfpagemode=None,
      bookmarksnumbered=true]{hyperref}

% ----------------------------------------------------------------
\vfuzz2pt % Don't report over-full v-boxes if over-edge is small
\hfuzz2pt % Don't report over-full h-boxes if over-edge is small
% THEOREMS -------------------------------------------------------

\newtheorem{thm}{Theorem}
\newtheorem{cor}[thm]{Corollary}  % Delete [thm] to re-start numbering
\newtheorem{lem}[thm]{Lemma}    % Delete [thm] to re-start numbering
\newtheorem{prop}[thm]{Proposition} % Delete [thm] to re-start numbering
\newtheorem{defi}[thm]{Definition} % Delete [thm] to re-start numbering

\newtheorem*{pb}{Problem}

\theoremstyle{definition}
 % Delete [thm] to re-start numbering
    % Delete [thm] to re-start numbering

\theoremstyle{remark}
\newtheorem{rem}{Remark}

\numberwithin{equation}{section}  % Number formulas within sections
\numberwithin{figure}{section}   % Number figures within sections

\setcounter{tocdepth}{2}      % Set the depth in the table of content

% MATH -----------------------------------------------------------

\newcommand{\CC}{{\mathbb{C}}}
\newcommand{\NN}{{\mathbb{N}}}
\newcommand{\PP}{{\mathbb{P}}}
\newcommand{\RR}{{\mathbb{R}}}

\newcommand{\eps}{\varepsilon}
\newcommand{\f}{\varphi}
\newcommand{\p}{\psi}

\newcommand{\dbar}{\bar{\partial}}
\newcommand{\zbar}{\bar{z}}

\newcommand{\norm}[1]{\left\Vert#1\right\Vert}
\newcommand{\abs}[1]{\left\vert#1\right\vert}
\newcommand{\set}[1]{\left\{#1\right\}}

\DeclareMathOperator{\Hess}{Hess}

\DeclareMathOperator{\Ric}{Ric}
\DeclareMathOperator{\tr}{tr}

% Metadata -------------------------------------------------------

\hypersetup{
  pdftitle={K\"{a}hler-Einstein fillings}, % Title
  pdfauthor={Vincent Guedj, Boris Kolev, Nader Yeganefar}, % Author
  pdfsubject={Complex Monge-Amp\`{e}re equation, K\"{a}hler-Einstein metrics, Local Moser-Trudinger inequality, pseudoconvex domains}, % Subject
  pdfkeywords={MSC 2010: 32W20, 32Q20}, % Keywords
  pdflang=EN % Language
  }

% ----------------------------------------------------------------

\begin{document}

\title{K\"{a}hler-Einstein fillings}

%  Information for first author
\author{Vincent Guedj}
\address{Institut Universitaire de France et Institut de Math\'{e}matiques de Toulouse, Universit\'{e} Paul Sabatier,
31062 Toulouse cedex 09, France}
\email{vincent.guedj@math.univ-toulouse.fr}

%  Information for second author
\author{Boris Kolev}
\address{LATP, CNRS \& Universit\'{e} de Provence, 39 Rue F. Joliot-Curie, 13453 Marseille Cedex 13, France}
\email{kolev@cmi.univ-mrs.fr}

%  Information for third author
\author{Nader Yeganefar}
\address{LATP, Universit\'{e} de Provence, 39 Rue F. Joliot-Curie, 13453 Marseille Cedex 13, France}
\email{Nader.Yeganefar@cmi.univ-mrs.fr}

%\thanks{*} %

\subjclass[2010]{32W20, 32Q20}%
\keywords{Complex Monge-Amp\`{e}re equation, K\"{a}hler-Einstein metrics, Local Moser-Trudinger inequality, pseudoconvex domains}%

\date{2011-11-22}%
%\dedicatory{}%
%\commby{}%

% ----------------------------------------------------------------

\begin{abstract}
We show that on an open bounded smooth strongly pseudoconvex subset of $\CC^{n}$, there exists a K\"{a}hler-Einstein metric with positive Einstein constant, such that the metric restricted to the Levi distribution of the boundary is conformal to the Levi form. To achieve this, we solve an associated complex Monge-Amp\`{e}re equation with Dirichlet boundary condition. We also prove uniqueness under some more assumptions on the open set.
\end{abstract}

\maketitle

% ----------------------------------------------------------------

\tableofcontents
\clearpage

% ----------------------------------------------------------------
% ----------------------------------------------------------------

\section{Introduction}
\label{sec:intro}

The study of Einstein metrics is an important and classical subject in Riemannian geometry, see \cite{Bes08}. The most popular framework is that of complete manifolds, either compact (without boundary) or noncompact. However, Einstein metrics on compact manifolds with boundary have also been investigated more recently, mainly in two directions which we now describe.

The first direction is that of conformally compact manifolds. Here, one starts with a compact manifold $M$ with boundary $\partial M$. A complete Einstein metric on (the interior of) $M$ is called conformally compact if after a suitable conformal transformation, it can be extended smoothly up to the boundary (think of the ball model of real hyperbolic space, or look at \cite{Biq00} for the precise definition). This extension is not unique, but different extensions are easily seen to induce Riemannian metrics on the boundary which are in the same conformal class, called the conformal infinity of the conformally compact metric. One of the basic questions is then: a conformal class being fixed on the boundary, is it possible to find a conformally compact Einstein metric on $M$ whose conformal infinity is the given conformal class? One then hopes to get links between the geometric properties of the inner metric and the conformal properties of the boundary; for more on this very a
 ctive research area, the reader may consult e.g. \cite{Biq00,AH08}.

We now come to the second direction, which has been explored far less than the first one and is more closely related to our present work. One starts again with a compact manifold $M$ with boundary, and fixes some geometric structure on the boundary (for example a metric). The problem is then to find an Einstein metric on $M$ which is smooth up to the boundary, and which induces the given geometric structure on $\partial M$.
Assume for example that there is an Einstein metric on $M$ with pinched negative curvature such that the boundary is convex and umbilical and let $h_{0}$ be the induced metric on $\partial M$. If $h$ is a metric on $\partial M$ which is sufficiently close to $h_{0}$, it has been shown in \cite{Sch01} that there is an Einstein metric on $M$ with negative Einstein constant such that the induced metric on $\partial M$ is $h$. One of the interesting questions, which has not been fully clarified yet, is to know what ``right'' geometric structure has to be fixed on the boundary. \cite{And08} considers the Dirichlet problem as in \cite{Sch01} (given a metric $h$ on $\partial M$, can one find an Einstein metric on $M$ inducing $h$ on $\partial M$?), studies the structure of the space of solutions and observes that this Dirichlet problem is not a well-posed elliptic boundary value problem. On the other hand, if one prescribes the metric and the second fundamental form of $\partial M$,
  then any Einstein metric on $M$ is essentially unique by \cite{AH08}.

The main purpose of this article is to investigate similar questions in the context of compact K\"{a}hler manifolds with boundary. Let $M$ be a compact K\"{a}hler manifold with \emph{strongly pseudoconvex} boundary $\partial M$. The latter is a CR manifold whose geometric properties are encoded by the (conformal class of its) \emph{Levi form}, a positive definite Hermitian form defined on the \emph{Levi distribution} $T_{\CC}(\partial M)$ (the family of maximal complex subspaces within the real tangent bundle).
The question we address is the following:

\begin{pb}
Can one find a K\"{a}hler-Einstein metric $\omega$ on $M$ such that its restriction to the Levi distribution is conformal to the Levi form on $T_{\CC}(\partial M)$?
\end{pb}

To simplify we restrict ourselves in the sequel to studying the case of a strongly pseudoconvex bounded open subset $\Omega$ of $\CC^{n}$. One can then always make a conformal change of the Levi form so that the pseudo-Hermitian Ricci tensor (introduced by Webster) is a scalar multiple of the Levi form, i.e. $\partial \Omega$ is pseudo-Einstein (see \cite{Lee88}). Our problem is thus intimately related to the Riemannian questions recalled above.

It is well known that finding a K\"{a}hler-Einstein metric is equivalent to solving a complex Monge-Amp\`{e}re equation. More specifically,
letting $\mu$ denote the Lebesgue measure in $\CC^{n}$ normalized such that $\mu (\Omega)=1$, we will be interested in the following Dirichlet problem : find a smooth strictly plurisubharmonic function $\varphi$ on $\Omega$ which vanishes on the boundary $\partial \Omega$ and satisfies
\begin{equation*}
  (dd^{c}\varphi )^{n} = \frac{e^{-\eps\varphi}\mu}{\int_\Omega e^{-\eps\varphi}\, d\mu} \quad \text{in} \quad \Omega,
\end{equation*}
where $\eps\in\{0,\pm 1\}$ is a fixed constant. If $\varphi$ is a solution of this problem, then it is easy to see that $dd^{c}\varphi$ is a K\"{a}hler-Einstein metric with the sign of the Einstein constant given by $\eps$, and moreover its restriction to the Levi distribution is conformal to the Levi form on $T_{\CC}(\partial \Omega)$ (see section~\ref{sec:geometric_context} for more details on this). Actually, if $\eps =0, -1$, then the Monge-Amp\`{e}re equation above has always a solution by Theorem 1.1 in \cite{CKNS85}, so that we will only consider the positive curvature case corresponding to $\eps = 1$. Our main result is

\begin{thm}\label{thm:thmA}
Let $\Omega \subset \CC^{n}$ be a bounded smooth strongly pseudoconvex domain. Then the complex Monge-Amp\`{e}re problem
\begin{equation*}
   (MA)\quad (dd^{c} \varphi)^{n}=\frac{e^{-\varphi}\mu}{\int_\Omega e^{-\varphi}\, d\mu}\, \textrm{in $\Omega$, and $\varphi_{\vert \partial \Omega}=0$}
\end{equation*}
has a strictly plurisubharmonic solution which is smooth up to the boundary.
\end{thm}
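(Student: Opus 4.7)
The plan is to apply the classical continuity method to the family of Dirichlet problems
\begin{equation*}
  (MA)_{t}:\quad (dd^{c}\varphi)^{n} = \frac{e^{-t\varphi}\mu}{\int_{\Omega}e^{-t\varphi}\,d\mu} \text{ in }\Omega, \quad \varphi_{\vert\partial\Omega}=0,
\end{equation*}
parametrized by $t\in[0,1]$. At $t=0$ the equation reduces to $(dd^{c}\varphi)^{n}=\mu$ with zero Dirichlet data, whose strictly plurisubharmonic solution smooth up to $\partial\Omega$ is provided by Theorem 1.1 of \cite{CKNS85}. The objective is to show that $S:=\{t\in[0,1]:(MA)_{t}\text{ has such a solution}\}$ is open, closed and nonempty, hence equal to $[0,1]$; evaluation at $t=1$ yields the theorem.

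For openness at $t_{0}\in S$ with smooth solution $\varphi_{0}$, I would apply the implicit function theorem between suitable H\"older spaces of functions vanishing on $\partial\Omega$. The linearization of $(MA)_{t_{0}}$ at $\varphi_{0}$ reads
\begin{equation*}
  L_{t_{0}}\psi = \tr_{\varphi_{0}}(dd^{c}\psi) + t_{0}\psi - t_{0}\int_{\Omega}\psi\,d\mu_{\varphi_{0}},
\end{equation*}
where $\mu_{\varphi_{0}}:=(dd^{c}\varphi_{0})^{n}$ is a probability measure on $\Omega$ by the very equation. Pairing $L_{t_{0}}\psi$ with $\psi$ against $\mu_{\varphi_{0}}$ and integrating by parts using $\psi_{\vert\partial\Omega}=0$ yields
\begin{equation*}
  \int_{\Omega}\psi L_{t_{0}}\psi\,d\mu_{\varphi_{0}} = -n\int_{\Omega}d\psi\wedge d^{c}\psi\wedge(dd^{c}\varphi_{0})^{n-1} + t_{0}\operatorname{Var}_{\mu_{\varphi_{0}}}(\psi),
\end{equation*}
and a Dirichlet Poincar\'e inequality with respect to $\mu_{\varphi_{0}}$ forces the right-hand side to be negative unless $\psi\equiv 0$; injectivity then promotes to invertibility via Fredholm theory, and IFT opens a neighborhood of $t_{0}$ in $S$.

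Closedness reduces to uniform a priori estimates along a sequence $t_{k}\to t_{\infty}$ in $S$ with smooth solutions $\varphi_{k}$. The upper bound $\varphi_{k}\le 0$ is immediate from the maximum principle applied to the plurisubharmonic function $\varphi_{k}$ vanishing on $\partial\Omega$. The main obstacle, and the heart of the proof, is the lower bound $\inf_{\Omega}\varphi_{k}\ge -C$: the exponential factor $e^{-t_{k}\varphi_{k}}$ amplifies exactly where $\varphi_{k}$ is most negative, so the simple maximum principle available in the cases $\eps\le 0$ treated by \cite{CKNS85} breaks down. To overcome this, I would adapt the Aubin--Tian strategy from the closed positive K\"ahler--Einstein setting: establish a local Moser--Trudinger inequality producing a uniform bound $\int_{\Omega}e^{-\alpha\varphi_{k}}\,d\mu\le C$ for some $\alpha>1$, and then deploy Ko\l{}odziej-type pluripotential capacity estimates, adapted to the Dirichlet setting on a strongly pseudoconvex domain, to convert this integrability of the right-hand side density into the $L^{\infty}$ bound on $\varphi_{k}$. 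Once $\|\varphi_{k}\|_{\infty}$ is controlled, the boundary gradient and boundary Hessian estimates of \cite{CKNS85}, built from barriers provided by the strong pseudoconvexity of $\partial\Omega$, together with the interior maximum principle applied to a quantity like $\Delta\varphi_{k}+|\nabla\varphi_{k}|^{2}$, yield uniform $C^{2}$ bounds and uniform strict plurisubharmonicity $dd^{c}\varphi_{k}\ge c_{0}\,dd^{c}|z|^{2}$. Evans--Krylov then gives a uniform $C^{2,\alpha}$ bound, Schauder bootstraps to uniform $C^{k,\alpha}$ estimates for every $k$, and Arzel\`a--Ascoli extracts a smooth limit solving $(MA)_{t_{\infty}}$, which shows $t_{\infty}\in S$.
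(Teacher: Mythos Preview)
Your continuity-method strategy is natural, but the openness step contains a genuine gap, and it is precisely the obstruction that led the authors to abandon this route in favor of a Ricci inverse iteration.

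You assert that ``a Dirichlet Poincar\'e inequality with respect to $\mu_{\varphi_{0}}$ forces the right-hand side to be negative unless $\psi\equiv 0$.'' Unpacking this, triviality of $\ker L_{t_{0}}$ requires
\[
n\int_{\Omega} d\psi\wedge d^{c}\psi\wedge(dd^{c}\varphi_{0})^{n-1}\;>\;t_{0}\,\mathrm{Var}_{\mu_{\varphi_{0}}}(\psi)
\]
for every nonzero $\psi$ with $\psi_{|\partial\Omega}=0$, which amounts to a lower bound $\lambda_{1}(-\Delta_{\varphi_{0}})>t_{0}$ on the first Dirichlet eigenvalue of the Laplacian of the \emph{unknown} metric $dd^{c}\varphi_{0}$. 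A Poincar\'e inequality certainly exists, but its constant depends on $\varphi_{0}$ and there is no mechanism to force it above $t_{0}$ from strong pseudoconvexity alone. On a closed manifold this is where Aubin's Bochner argument enters; on a domain with boundary the analogous Lichnerowicz--Reilly computation produces a boundary term involving $\Hess\rho(Jn,Jn)$ (convexity of $\partial\Omega$ for $dd^{c}\varphi_{0}$, not pseudoconvexity), over which one has no control. The paper spells this out in Section~\ref{subsec:continuity_method} and in the remark following Theorem~\ref{thm:thmC}, citing \cite[Proposition~4.1]{GKY11} for a strongly pseudoconvex domain where the required spectral estimate actually fails. So your openness argument, as written, does not go through.

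Your closedness sketch, by contrast, is essentially correct and parallels the paper's actual estimates: a local Moser--Trudinger inequality (Theorem~\ref{thm:MT}) bounds $\int e^{-\varphi}$ in terms of the energy, Ko\l odziej then yields the $C^{0}$ bound, and the $C^{2}$ and Evans--Krylov steps follow \cite{CKNS85} with the adjustments of Section~\ref{sec:higher_order_estimates}. The paper feeds these same ingredients into the iteration $(dd^{c}\varphi_{j+1})^{n}=e^{-\varphi_{j}}\mu/\int e^{-\varphi_{j}}d\mu$, shows that the functional $\mathcal{F}=\mathcal{E}+\log\int e^{-\varphi}d\mu$ is increasing along the iteration (Proposition~\ref{prop:monotonediscrete}) and proper (Proposition~\ref{prop:proper}), hence the energies stay bounded; compactness then produces a cluster point on which $\mathcal{F}\circ T=\mathcal{F}$, forcing it to be a fixed point of $T$ and thus a solution of $(MA)$. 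The virtue of this approach is that it never touches the linearized operator.
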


By the considerations of section~\ref{sec:geometric_context}, a consequence of this theorem is that our geometrical problem has a solution:

\begin{cor}\label{cor:corB}
Let $\Omega \subset \CC^{n}$ be a bounded smooth strongly pseudoconvex domain. Then there is a smooth (up to the boundary) K\"{a}hler-Einstein metric on $\Omega$ with positive Einstein constant such that the restriction of the metric to the Levi distribution of $\partial \Omega$ is conformal to the Levi form.
\end{cor}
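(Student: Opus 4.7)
The plan is to unpack what the Monge--Amp\`ere solution supplied by Theorem~\ref{thm:thmA} means geometrically. Let $\varphi$ be a smooth strictly plurisubharmonic solution of $(MA)$ and set $\omega := dd^{c}\varphi$. Smoothness up to the boundary together with strict plurisubharmonicity guarantee that $\omega$ is a genuine smooth K\"ahler form on $\overline{\Omega}$. It then remains to check two things: that $\omega$ is Einstein with positive Einstein constant, and that its restriction to the Levi distribution $T_{\CC}(\partial\Omega)$ lies in the conformal class of the Levi form.

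For the Einstein property I would invoke the standard formula $\Ric(\omega) = -dd^{c}\log\det(\partial^{2}\varphi/\partial z_{i}\partial\bar z_{j}) = -dd^{c}\log(\omega^{n}/\mu)$. Setting $c := 1/\int_{\Omega}e^{-\varphi}\,d\mu > 0$, the equation $(MA)$ reads $\omega^{n} = c\,e^{-\varphi}\mu$, so
\[
\Ric(\omega) \;=\; -dd^{c}\bigl(\log c - \varphi\bigr) \;=\; dd^{c}\varphi \;=\; \omega,
\]
showing that $\omega$ is K\"ahler--Einstein with Einstein constant $+1$.

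For the boundary condition, I would first verify that $\varphi$ itself is a smooth defining function for $\Omega$. Indeed $\varphi$ is subharmonic with $\varphi|_{\partial\Omega}=0$, so the maximum principle yields $\varphi<0$ on $\Omega$; smoothness of $\partial\Omega$ and Hopf's lemma then give $d\varphi\neq 0$ along $\partial\Omega$. Consequently, for any $X,Y\in T_{\CC}(\partial\Omega)$,
\[
\omega(X,\bar Y) \;=\; dd^{c}\varphi(X,\bar Y) \;=\; L_{\varphi}(X,\bar Y),
\]
where $L_{\varphi}$ is the Levi form of $\partial\Omega$ associated with the defining function $\varphi$. Since a change of defining function $\rho\mapsto h\rho$ (with $h>0$ smooth) scales the Levi form by $h$ on $\partial\Omega$, the Levi form is only defined up to a positive conformal factor, and the displayed equality says precisely that $\omega|_{T_{\CC}(\partial\Omega)}$ sits in its conformal class.

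There is no real obstacle here beyond the (already nontrivial) Theorem~\ref{thm:thmA}: the corollary is a translation of analytic data into K\"ahler and CR geometric language, essentially the content of section~\ref{sec:geometric_context}. The one technical point worth being careful about is ensuring $d\varphi \neq 0$ on $\partial\Omega$, so that $\varphi$ is genuinely a defining function and the restriction $dd^{c}\varphi|_{T_{\CC}(\partial\Omega)}$ may legitimately be identified with a representative of the Levi conformal class.
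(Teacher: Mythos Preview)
Your argument is correct and matches the paper's approach: the corollary is deduced from Theorem~\ref{thm:thmA} via the computations of section~\ref{sec:geometric_context}, which show that any solution $\varphi$ of $(MA)$ yields a K\"ahler--Einstein form $dd^{c}\varphi$ whose restriction to the Levi distribution is conformal to the Levi form. The paper reaches the conformality by Taylor-expanding $\varphi$ in a fixed defining function $\rho$ (subsection~\ref{subsec:nonpositive}) rather than by recognizing $\varphi$ itself as a defining function via Hopf's lemma, but this is only a cosmetic difference; note also that with the paper's normalization of $d^{c}$ the Einstein constant comes out as $\pi$ rather than $1$, which of course does not affect the stated conclusion.
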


Let us now say a few words about the proof of our main theorem. We will use a Ricci inverse iteration procedure, as described first in the compact K\"{a}hler setting by \cite{Kel09} and \cite{Rub08}, whereas related results have recently been obtained in \cite{BB11,Ceg11} by other interesting approaches. More precisely, fix any smooth strictly plurisubharmonic function $\f_{0}$ on $\Omega$ which vanishes on the boundary, and for $j\in \NN$, let $\f_{j}$ be the unique strictly plurisubharmonic solution of the Dirichlet problem
\begin{equation*}
   (dd^{c} \f_{j+1})^{n}=\frac{e^{-\varphi_{j}}\mu}{\int_\Omega e^{-\varphi_{j}}\, d\mu}\, \textrm{in $\Omega$, and ${\varphi_{j+1}}_{\vert \partial \Omega}=0$},
\end{equation*}
whose existence is guaranteed by \cite{CKNS85}. We will then show that $(\f_{j})$ is bounded in $C^\infty (\bar{\Omega})$, so that a subsequence converges in $C^\infty (\bar{\Omega})$ to a smooth function which is seen to be a solution of $(MA)$. To prove this boundedness in $C^\infty$, we proceed in several steps. First, there is a well-known functional $\mathcal{F}$, defined on the space of plurisubharmonic functions, such that a function $\f$ solves $(MA)$ if and only if $\f$ is a critical point of $\mathcal{F}$ (see subsection~\ref{subsec:properness}). A key result is that this functional is proper in the sense of Proposition~\ref{prop:proper}. This properness result is in turn a consequence of a local Moser-Trudinger inequality (see Theorem~\ref{thm:MT}, and also the recent independent results of \cite{BB11,Ceg11}). Next, we show that the sequence $(\mathcal{F}(\f_{j}))$ is bounded, so that by properness, the sequence $(\f_{j})$ has to live in some compact set. Here, com
 pactness is for the $L^1$-topology in the class of plurisubharmonic functions with finite energy introduced in \cite{BGZ09}. Standard results from pluripotential theory then show that $(\f_{j})$ is uniformly bounded. To get boundedness in $C^\infty$, we will finally prove higher order a priori estimates, along the lines of \cite{CKNS85}.

\smallskip

Now, let us deal with the uniqueness problem. For this, we impose some restrictions on $\Omega$. First, we assume that $\Omega$ contains the origin and is circled; this means that $\Omega$ is invariant by the natural (diagonal) $S^1$-action on $\CC^{n}$. Next, if $\varphi$ is a $S^1$-invariant solution of the Monge-Amp\`{e}re equation with Dirichlet boundary condition, we will say that $\Omega$ is (strictly) $\varphi$-convex if $\Omega$ is (strictly) convex in the Riemannian sense for the metric $dd^{c}\varphi$. Note that being $\varphi$-convex has \textit{a priori} nothing to do with being convex in the usual Euclidean sense in $\CC^{n}$. We will prove

\begin{thm}\label{thm:thmC}
Let $\Omega \subset \CC^{n}$ be a bounded smooth strongly pseudoconvex domain which is circled. Let $\varphi$ be a smooth $S^1$-invariant strictly plurisubharmonic solution of the complex Monge-Amp\`{e}re problem $(MA)$. If $\Omega$ is strictly $\varphi$-convex, then $\varphi$ is the unique $S^1$-invariant solution of $(MA)$.
\end{thm}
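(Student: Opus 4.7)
The strategy is to adapt the Bando--Mabuchi--Berndtsson uniqueness method for K\"{a}hler--Einstein metrics of positive Einstein constant to the present Dirichlet setting. Suppose $\psi$ is a second smooth $S^1$-invariant strictly plurisubharmonic solution of $(MA)$; the goal is to prove $\psi = \varphi$. The plan is to join $\varphi$ to $\psi$ by a weak Mabuchi geodesic inside the space of psh functions on $\bar{\Omega}$ vanishing on $\partial\Omega$, and then, via convexity of a suitable functional, to show that this geodesic must be constant.

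Setting $S = \{\tau \in \CC : 0 < \Re\tau < 1\}$, the weak geodesic $(\varphi_t)_{t \in [0,1]}$ is obtained by solving the Dirichlet problem
\[
(dd^{c}\Phi)^{n+1} = 0 \text{ on } \Omega \times S, \quad \Phi|_{\Re\tau = 0} = \varphi, \ \Phi|_{\Re\tau = 1} = \psi, \ \Phi = 0 \text{ on } \partial\Omega \times \bar{S},
\]
for which Bedford--Taylor / Chen theory provides a bounded psh solution depending only on $\Re\tau$; setting $\varphi_t := \Phi(\cdot, t)$, uniqueness of this solution together with the $S^1$-invariance of the two endpoints forces each $\varphi_t$ to be $S^1$-invariant. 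I would then show that the Ding-type functional $\mathcal{F}$ of subsection~\ref{subsec:properness} is convex along $(\varphi_t)$: its Monge--Amp\`{e}re energy part is linear along weak geodesics (since $(dd^{c}\Phi)^{n+1} = 0$), while the entropy term $-\log \int_\Omega e^{-\varphi_t}\, d\mu$ is convex in $t$ by Berndtsson's subharmonicity theorem for the relative Bergman kernel of the weight $e^{-\Phi}$. Since $\varphi$ and $\psi$ are critical points of $\mathcal{F}$, convexity of $t \mapsto \mathcal{F}(\varphi_t)$ combined with vanishing derivatives at the two endpoints forces this function to be affine on $[0,1]$.

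The main obstacle is the concluding rigidity step. The equality case of Berndtsson's theorem yields a holomorphic one-parameter family of automorphisms $(\sigma_t)$ of $\Omega$, extending smoothly to $\bar{\Omega}$ and preserving $\partial\Omega$ (the latter because $\varphi_t|_{\partial\Omega} = 0$), with $\varphi_t = \sigma_t^{*} \varphi$. The geodesic is constant iff the infinitesimal generator $V$ of $(\sigma_t)$ satisfies $V \cdot \varphi \equiv 0$. This is precisely where the hypothesis of strict $\varphi$-convexity is used: the real vector field $X = V + \bar V$ is tangent to $\partial\Omega$, and I expect to apply a Bochner--Lichnerowicz identity on the compact Riemannian manifold $(\bar{\Omega}, dd^{c}\varphi)$, whereby the bulk integral $\int_{\bar{\Omega}} (|\nabla X|^{2} - \Ric(X, X))$ reduces, after integration by parts, to a boundary integral involving the second fundamental form of $\partial\Omega$ in the metric $dd^{c}\varphi$ evaluated on $X$. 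Strict Riemannian convexity of $\partial\Omega$ gives a definite sign for the boundary contribution, which, combined with the positive Ricci condition $\Ric(dd^{c}\varphi) = dd^{c}\varphi > 0$ in the bulk and the $S^1$-invariance of $X$, should force $X \equiv 0$. Hence $\sigma_t = \mathrm{id}$, $\varphi_t \equiv \varphi$, and in particular $\psi = \varphi$.
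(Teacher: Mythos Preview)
Your overall strategy---joining $\varphi$ and $\psi$ by a weak geodesic in the space of psh functions with zero boundary values and exploiting the behaviour of $\mathcal{F}$ along it via Berndtsson's subharmonicity theorem---is exactly the paper's. (There is a sign slip: with the paper's convention $\mathcal{F}=\mathcal{E}+\log\int_\Omega e^{-\varphi}\,d\mu$, the functional is \emph{concave} along geodesics and solutions are \emph{maximizers}, not minimizers.) The divergence is entirely in the rigidity step, and there your argument has a real gap.

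You invoke the equality case of Berndtsson's theorem to extract a holomorphic one-parameter family of automorphisms of $\Omega$. But the result the paper uses, \cite{Ber06}, gives only the convexity statement; the characterization of equality in terms of holomorphic motions is a later and substantially harder theorem of Berndtsson, established in the compact K\"ahler setting. Its validity for the Dirichlet problem on a bounded pseudoconvex domain---including the claimed smooth extension of the automorphisms to $\bar\Omega$ and the fact that they preserve $\partial\Omega$---would have to be proved from scratch, and you have not indicated how. The subsequent Bochner--Lichnerowicz step is likewise only a hope (``I expect to apply\dots''): it is not clear that the boundary term in the integrated Bochner identity has the sign you want when $X$ is merely tangent to $\partial\Omega$.

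The paper sidesteps both difficulties by staying at the scalar level. Once $\mathcal{F}$ is constant along the geodesic, the variational characterization (Corollary~\ref{cor:variational}) shows that \emph{every} $\Phi_t$ solves $(MA)$. Differentiating this one-parameter family of Monge--Amp\`ere equations at $t=0$ produces a linear equation for the scalar function $\dot\Phi_0$:
\[
-\Delta_\varphi \dot\Phi_0 \;=\; \dot\Phi_0 - \int_\Omega \dot\Phi_0\,(dd^c\varphi)^n,\qquad \dot\Phi_0\big|_{\partial\Omega}=0.
\]
Strict $\varphi$-convexity enters only through the Lichnerowicz-type estimate of \cite{GKY11}, which says the first Dirichlet eigenvalue of $-\Delta_\varphi$ exceeds $1$; a short argument with $\dot\Phi_0^{\pm}$ then forces $\dot\Phi_0\equiv 0$. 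Affineness of $\mathcal{E}$ together with convexity of $t\mapsto\Phi_t(z)$ finishes the proof. This route never needs the equality analysis of Berndtsson's theorem nor any holomorphic vector field, and it isolates cleanly the one place where the geometric hypothesis is used: the spectral gap $\lambda_1(-\Delta_\varphi)>1$.
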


Observe that a $S^1$-invariant solution always exists, as follows from the proof of
Theorem \ref{thm:thmA}: it suffices to start with an initial datum $\f_0$ which is $S^1$-invariant, the
approximants $\f_j$ will also be $S^1$-invariant (by the uniqueness part of \cite{CKNS85}), hence so
is any cluster value.

\begin{rem}
In the proof of this theorem, we will see that we can replace the $\varphi$-convexity hypothesis by a spectral assumption. Namely, if the first eigenvalue of the Laplace operator (of the metric $\omega^\f=dd^{c}\varphi$) with Dirichlet boundary condition is strictly bigger than $1$, then $(MA)$ has a unique solution. By Corollary 1.2 in \cite{GKY11}, the condition on the Ricci curvature of $\omega^\f$ and the strict $\varphi$-convexity imply this desired spectral estimate. However, \cite{GKY11}[Proposition 4.1] shows that this estimate may fail if $\Omega$ is merely strongly pseudoconvex.
\end{rem}

To prove Theorem~\ref{thm:thmC}, we follow the approach proposed by Donaldson in the compact (without boundary) setting (see \cite{Don99,BBGZ09}). The heuristic point of view is the following. The space of all plurisubharmonic functions on $\Omega$ which vanish on the boundary may be seen as an infinite dimensional manifold with a natural Riemannian structure. In the $S^1$-invariant case, we may use a convexity result of Berndtsson \cite{Ber06} to show that the functional $\mathcal F$ is concave along geodesics of this space. As a consequence, we show that $S^1$-invariant solutions of $(MA)$ coincide with $S^1$-invariant maximizers of the functional $\mathcal{F}$. Now, if $\f$ and $\p$ are two $S^1$-invariant solutions of $(MA)$, then there exists a geodesic $(\Phi_{t})_{0\leq t\leq 1}$ in the space of K\"{a}hler potentials on $\Omega$ vanishing on the boundary which joins $\f$ to $\p$. Therefore, the function $t\mapsto \mathcal{F}(\Phi_{t})$, being concave and attaining its
 maximum at $t=0$ and $t=1$, must be constant. In particular, its derivative vanishes, which implies that $\dot{\Phi}_{0}$ has to satisfy a PDE involving the Laplacian of the metric $dd^{c}\f$ (see equation \eqref{equ:elliptic_equation} below). If $\Omega$ is $\varphi$-convex, or more generally if the spectral hypothesis alluded to above is satisfied, then the only solution of this PDE is zero, so that $\dot{\Phi}_{0}$ vanishes identically. From this, we may deduce that $(\Phi_{t})$ is a constant geodesic, hence $\f=\p$. Note that in the above argument, we have implicitly assumed that $(\Phi_{t})$ is smooth, which may not be the case. For general continuous geodesics, the proof needs some modifications which will be given in section~\ref{sec:conclusion}.

\smallskip

This uniqueness result has the following application. In \cite[Conjecture 7.5]{BB11}, it is conjectured that if $B$ is a ball in $\CC^{n}$, then any solution of $(MA)$ has to be radial. Theorem~\ref{thm:thmC} shows that this is the case among $S^1$-invariant solutions if the radius of the ball is not too large. Indeed, let $B\subset \CC^{n}$ be the ball of radius $R>0$ centered at $0$. Consider the radial function
\begin{equation*}
  \f = \frac{n+1}{\pi}\left[\log{\sqrt{1+\norm{z}^{2}}}-\log{\sqrt{1+R^{2}}}\right].
\end{equation*}
In an affine chart, $\f$ is the potential of the Fubini-Study metric on complex projective space $\PP^{n}(\CC)$, normalized to satisfy $(MA)$ on $B$. Note that $B$ may also be considered as a ball in $\PP^{n}(\CC )$, whose radius $R_{FS}$ with respect to the Fubini Study metric is
\begin{equation*}
  R_{FS}=\sqrt{\frac{n+1}{\pi}}\arctan{R}.
\end{equation*}
The diameter of $\PP^{n}(\CC)$ is then
\begin{equation*}
  D_{FS}=\sqrt{\pi (n+1)}/2.
\end{equation*}
If $R_{FS}< D_{FS}/2$, then $B$ is strictly convex in $\PP^{n}(\CC )$, that is $B$ is strictly $\f$-convex (this is a well-known result, see for example the proof of \cite[Proposition 4.1]{GKY11}). By Theorem~\ref{thm:thmC}, $\f$ is the unique $S^1$-invariant solution of $(MA)$, so that all such solutions are radial. We have thus proved

\begin{cor}\label{cor:DDL}
Let $B$ be a ball in $\CC^{n}$ of radius $0<R<1$.
Then there is a unique $S^1$-invariant solution to $(MA)$ on $B$, and this solution is radial.
\end{cor}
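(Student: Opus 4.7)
The plan is to exhibit an explicit radial $S^1$-invariant solution of $(MA)$ on $B$ and then invoke Theorem~\ref{thm:thmC} to conclude that every $S^1$-invariant solution must coincide with it. First, I would verify that the radial function
\[
\f(z)=\frac{n+1}{\pi}\left[\log\sqrt{1+\norm{z}^{2}}-\log\sqrt{1+R^{2}}\right]
\]
is a strictly plurisubharmonic solution of $(MA)$ on $B$. This is a direct computation: $\f$ is a constant multiple of the Fubini--Study potential in an affine chart, so $dd^{c}\f$ is proportional to the Fubini--Study form, which satisfies $\Ric=\lambda\,\omega$ for a positive $\lambda$. The normalising constant $(n+1)/\pi$ is chosen precisely so that $(dd^{c}\f)^{n}$ is a positive multiple of $e^{-\f}\mu$, and the additive constant $\log\sqrt{1+R^{2}}$ ensures $\f_{|\partial B}=0$. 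Existence of an $S^1$-invariant solution is in any case guaranteed by Theorem~\ref{thm:thmA}.

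Next, I would translate the hypothesis $R<1$ into the geometric statement that $B$ is strictly $\f$-convex. With $R_{FS}=\sqrt{(n+1)/\pi}\,\arctan R$ and $D_{FS}=\sqrt{\pi(n+1)}/2$ as above, a direct manipulation gives
\[
R_{FS}<D_{FS}/2 \iff \arctan R<\pi/4 \iff R<1.
\]
It is classical that in $\PP^{n}(\CC)$ endowed with the Fubini--Study metric, any geodesic ball of radius strictly less than half the ambient diameter is strictly geodesically convex; this is used in the proof of \cite[Proposition 4.1]{GKY11}. Since $B$, viewed through the affine chart, is such a ball, it is strictly $\f$-convex in the sense of the introduction.

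Having checked that $B$ is a bounded smooth strongly pseudoconvex circled domain and that the $S^1$-invariant solution $\f$ makes $B$ strictly $\f$-convex, Theorem~\ref{thm:thmC} applies verbatim and yields that $\f$ is the \emph{unique} $S^1$-invariant solution of $(MA)$. Since $\f$ is manifestly radial, every $S^1$-invariant solution must coincide with $\f$ and is therefore radial as well. I do not expect a serious obstacle here: the substantive content is all carried by Theorem~\ref{thm:thmC}, and what remains for the corollary is only the elementary identification of the radius regime $R<1$ with the strict $\f$-convexity of $B$, together with the standard fact that $\f$ itself solves $(MA)$.
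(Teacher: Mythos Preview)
Your proposal is correct and follows essentially the same argument as the paper: exhibit the Fubini--Study potential $\f$, check it solves $(MA)$ on $B$, verify that $R<1$ is equivalent to $R_{FS}<D_{FS}/2$ so that $B$ is strictly $\f$-convex, and apply Theorem~\ref{thm:thmC}. The paper's proof is in fact the paragraph immediately preceding the statement of the corollary, and your write-up matches it step for step.
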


The plan of the paper is as follows. In section~\ref{sec:geometric_context}, we gather some well-known facts on the geometry of pseudoconvex domains and show how our geometrical problem is related to the analytical problem of solving a complex Monge-Amp\`{e}re equation with Dirichlet boundary condition. In section~\ref{sec:energy_estimates}, we prove a local Moser-Trudinger inequality and use it to prove a properness result for the functional $\mathcal{F}$. In section~\ref{sec:higher_order_estimates}, we deal with the regularity problem of solutions of $(MA)$, by getting higher order a priori estimates. This will allow us to prove Theorem~\ref{thm:thmA} in subsection~\ref{subsec:conclusion}. In section~\ref{sec:variational_characterization}, we obtain a variational characterization of solutions of $(MA)$ in the $S^1$-invariant case. Indeed, we show that $S^1$-invariant solutions of $(MA)$ are not only critical points of the functional $\mathcal{F}$, but are exactly maximizers of $\mathcal{F}$. Then we proceed to prove Theorem~\ref{thm:thmC}. In section~\ref{sec:conclusion}, we comment on the difficulty of solving $(MA)$ by the usual continuity method, and finally discuss the optimality of constants in the Moser-Trudinger inequality.

\medskip

\noindent {\bf Acknowledgements.} It is a pleasure to thank Robert Berman and Bo Berndtsson for stimulating discussions related to their joint work \cite{BB11}. We would like also to thank Thierry Gallou\"{e}t and Marc Herzlich for helpful discussions during the preparation of this paper.

% ----------------------------------------------------------------
% ----------------------------------------------------------------

\section{Geometric context}
\label{sec:geometric_context}

% ----------------------------------------------------------------

\subsection{The conformal class of the Levi form}
\label{subsec:clevi}

Let $\Omega \subset \CC^{n}$ be a bounded domain with smooth boundary. Fix a defining function $\rho : \CC^{n} \to \RR$ for the boundary $\partial \Omega$, i.e. $\rho$ is a smooth function satisfying
\begin{equation*}
  \Omega = \set{\rho < 0}, \qquad \partial \Omega = \set{\rho = 0} ,
\end{equation*}
and $d\rho$ does not vanish on $\partial \Omega$. Such a function $\rho$ is not unique, but if $\tilde{\rho}$ is another defining function for the boundary, then there is a smooth positive function $u$ such that $\tilde{\rho} =u\rho$.

Let now $x\in \partial \Omega$ be a fixed point, and denote by $H_x$ the maximal complex subspace of the tangent space $T_x\partial \Omega$. If $J$ denotes the complex structure on $\CC^{n}$ (which is just multiplication by $\sqrt{-1}$), then we have
\begin{equation*}
  H_x = \set{v\in T_x\partial \Omega ; \; Jv\in T_x\partial \Omega}.
\end{equation*}
The subspace $H_x$ has real dimension $2n-2$, and as $x$ varies, we get a distribution $H\subset T\partial \Omega$, called the \emph{Levi distribution}. If $(z_{1}, \ldots , z_{n})$ are the coordinates on $\CC^{n}$, then it is easy to see that
\begin{equation}\label{eq:contact}
  H_x = \set{v=(v_{1},\ldots ,v_{n})\in \CC^{n} ; \; \sum_{i=1}^{n}\frac{\partial \rho}{\partial z_{i}}(x)v_{i}=0}.
\end{equation}
The \emph{Levi form} is the Hermitian form defined for $v,w\in H_x$ by
\begin{equation*}
  L_x(v,w) = \sum_{i,j}^{n}\frac{\partial^{2}\rho}{\partial z_{i}\partial \zbar_{j}}(x)v_{i}\bar{w}_{j}.
\end{equation*}
It is clear from this expression that the Levi form actually depends on $\rho$, so talking about \emph{the} Levi form is a slight abuse. However, if $\tilde{\rho} = u \rho$ is another defining function for the boundary (with $u$ a smooth positive function), then we have
\begin{equation*}
   \frac{\partial^{2}\tilde{\rho}}{\partial z_{i}\partial \zbar_{j}} = u\frac{\partial^{2}\rho}{\partial z_{i}\partial \zbar_{j}} + \frac{\partial u}{\partial z_{i}}\frac{\partial \rho}{\partial \zbar_{j}} + \frac{\partial u}{\partial \zbar_{j}}\frac{\partial \rho}{\partial z_{i}}+ \rho \frac{\partial^{2}u}{\partial z_{i}\partial \zbar_{j}}.
\end{equation*}
Moreover, by using the characterization \eqref{eq:contact} of $H$ and the fact that $\rho =0$ on $\partial \Omega$, we infer, denoting by $\tilde{L}$ the Levi form corresponding to $\tilde{\rho}$, that
\begin{equation*}
  \tilde{L}=uL.
\end{equation*}
In other words, the Levi forms corresponding to different defining functions for the boundary differ only by a conformal factor. Thus, the geometrically interesting object on the boundary is the conformal class of the Levi form.

We say that $\Omega$ is \emph{strongly pseudoconvex} if the Levi form is a positive definite Hermitian form at each point of $\partial \Omega$. Our previous discussion shows that this notion does not depend on the choice of a defining function for the boundary. Note also that by changing $\rho$ to $e^{c\rho}-1$, where $c>0$ is a large enough positive constant, we may assume that the Levi form is positive definite
in a neighborhood of  $\overline{\Omega}$, and not only on the Levi distribution.

% ----------------------------------------------------------------

\subsection{K\"{a}hler metrics}
\label{subsec:kahler_metric}

We give here a brief review of K\"{a}hler metrics, mainly to set up some notations and conventions. For more details and proofs, the reader may consult e.g. \cite{Mor07}. Although we will be dealing with domains in $\CC^{n}$ in the sequel, we consider a general complex manifold $X$ of complex dimension $n$, and denote by $J$ its complex structure.

\subsubsection{K\"{a}hler form}

A Riemannian metric $g$ on $X$ is called \emph{Hermitian} if it is $J$-invariant, i.e. $g(J\cdot, J\cdot)=g(\cdot,\cdot)$. The $\CC$-bilinear extension of $g$ to the complexified tangent bundle $TX\otimes \CC$ will also be denoted by the same symbol $g$. The \emph{fundamental form} associated to $g$ is the real $(1,1)$-form $\omega$ defined by
\begin{equation*}
  \omega (\cdot , \cdot) =g(J\cdot , \cdot).
\end{equation*}
The metric $g$ is called a \emph{K\"{a}hler metric} if $\omega$ is a closed differential form; $\omega$ is then referred to as the \emph{K\"{a}hler form} of $g$. It can be shown that $g$ being a K\"{a}hler metric is equivalent to the complex structure $J$ being parallel with respect to the Levi-Civita connection of $g$.

Let $(z_{1}, \dotsc , z_{n})$ be local complex coordinates, and let
\begin{equation*}
  z_{1} = x_{1} + \sqrt{-1}y_{1}, \quad \dotsc \quad , \quad z_{n} = x_{n} + \sqrt{-1}y_{n}
\end{equation*}
be the decomposition giving the corresponding real coordinates. As usual, for $i=1, \dotsc , n$, we set
\begin{equation*}
   \frac{\partial}{\partial z_{i}}=\frac{1}{2}(\frac{\partial}{\partial x_{i}}-\sqrt{-1} \frac{\partial}{\partial y_{i}} ) ,\quad \frac{\partial}{\partial \zbar_{i}}=\frac{1}{2}(\frac{\partial}{\partial x_{i}}+\sqrt{-1} \frac{\partial}{\partial y_{i}} ),
\end{equation*}
\begin{equation*}
  dz_{i}=dx_{i}+\sqrt{-1}dy_{i} ,\quad d\zbar_{i}=dx_{i}-\sqrt{-1}dy_{i},
\end{equation*}
and for $i,j=1,\dotsc , n$,
\begin{equation*}
  g_{i\bar{j}} = g(\frac{\partial}{\partial z_{i}} , \frac{\partial}{\partial \zbar_{j}}).
\end{equation*}
Then the K\"{a}hler form is given locally by
\begin{equation*}
  \omega =\sqrt{-1} \sum_{i,j=1}^{n} g_{i\bar{j}} dz_{i}\wedge d\zbar_{j}.
\end{equation*}
Note that on $\CC^{n}$, we have $g_{i\bar{j}}=\delta_{ij}/2$ for the canonical Euclidean metric.

\subsubsection{Ricci curvature form}

We denote by $r$ the Ricci tensor of $X$ as a Riemannian manifold. The \emph{Ricci form} of $X$, to be denoted by $\Ric{(\omega)}$ or simply $\Ric$, is the $(1,1)$-form associated to $r$, i.e.
\begin{equation*}
  \Ric{(\omega )}(\cdot,\cdot) =r(J\cdot,\cdot).
\end{equation*}
In local holomorphic coordinates, it can be shown that
\begin{equation*}
  \Ric{(\omega )} = -\sqrt{-1}\partial \dbar \log{\det{g_{i\bar{j}}}}.
\end{equation*}
There follows that the Ricci form is a closed form. Moreover, its cohomology class is equal to $2\pi c_{1}(X)$ , where $c_{1}(X)$ is the first Chern class of $X$.
A K\"{a}hler metric $\omega$ on $X$ is called \emph{K\"{a}hler-Einstein} if for some constant $\lambda \in \RR$, we have
\begin{equation*}
  \Ric{(\omega )} = \lambda \omega .
\end{equation*}

\subsubsection{Normalization of $d^{c}$}

We set
\begin{equation*}
  d^{c}=\frac{1}{2\pi \sqrt{-1}}(\partial -\dbar),
\end{equation*}
so that
\begin{equation*}
  \sqrt{-1}\partial \dbar = \pi dd^{c}.
\end{equation*}

This normalization is of common use in complex analytic geometry, having the following advantages: the positive current
$T=dd^{c} \log \norm{z}$ has then Lelong number $1$ at the origin in $\CC^{n}$; moreover the Fubini-Study form
$\omega_{FS}$ writes, in some affine chart $\CC^{n}$,
\begin{equation*}
  \omega_{FS}=dd^{c} \log \sqrt{1 + \norm{z}^{2}}.
\end{equation*}
Its cohomology class thus coincides with that of a hyperplane (as it should), having total volume
\begin{equation*}
  \int_{\PP^{n}} \omega_{FS}^{n}=\int_{\CC^{n}} \left(dd^{c} \log \sqrt{1+\norm{z}^{2}} \right)^{n}=1.
\end{equation*}
Note finally that $\Ric{(\omega_{FS})} = (n+1)\pi\omega_{FS}$.

Likewise, the Laplacian $\Delta$ associated to a K\"{a}hler metric $\omega$ is defined as
\begin{equation*}
  \Delta = \tr{(dd^{c})},
\end{equation*}
where $\tr$ denotes the trace with respect to $\omega$. Hence, we have
\begin{equation*}
  \Delta = -\frac{1}{\pi}\dbar^*\dbar .
\end{equation*}

% ----------------------------------------------------------------

\subsection{K\"{a}hler-Einstein metrics on strongly pseudoconvex domains}
\label{subsec:pseudoconvex_domain}

Fix $\Omega \subset \CC^{n}$ a bounded strongly pseudoconvex domain.

\subsubsection{Associated complex Monge-Amp\`{e}re equations}

In this section, we show that finding K\"{a}hler-Einstein metrics is equivalent to solving a complex Monge-Amp\`{e}re equation.

We assume first that $\Omega$ is endowed with a K\"{a}hler metric $\omega$ which is smooth up to the boundary, and which satisfies the following normalized Einstein condition:
\begin{equation*}
  \Ric{(\omega )}=\eps\pi \omega,
\end{equation*}
where $\eps \in \{ 0, \pm 1\}$ (the somewhat unusual $\pi$ factor is due to our normalization convention for the $d^{c}$ operator). We choose a smooth potential $\f$ for $\omega$, so that
\begin{equation*}
  \omega =dd^{c}\f.
\end{equation*}
Such a potential is unique up to the addition of a pluriharmonic function on $\Omega$. We are going to see that $\f$ satisfies a complex Monge-Amp\`{e}re equation. As recalled in the previous section, if we denote by $(g_{i\bar{j}})$ the components of the metric in coordinates, then the Ricci form is given by
\begin{equation*}
  \Ric{(\omega)}=-\pi dd^{c} \log{(\det{g_{i\bar{j}})}}.
\end{equation*}
Letting $V_{0}$ be the canonical volume form on $\CC^{n}$, it is easily checked that $\omega^{n}$ is equal to $\det{(g_{i\bar{j}})}V_{0}$, up to a multiplicative constant. Therefore, we have the following intrinsic formula for the Ricci form:
\begin{equation*}
  \Ric{(\omega)}=-\pi dd^{c} \log{\frac{\omega^{n}}{V_{0}}}.
\end{equation*}
The Einstein condition on $\omega$ can then be written
\begin{equation*}
  dd^{c} \left[\log{\frac{(dd^{c} \f)^{n}}{V_{0}}}+\eps\f \right]=0.
\end{equation*}
Thus, there is a pluriharmonic function $h$ such that
\begin{equation*}
  \log{\frac{(dd^{c} \f)^{n}}{V_{0}}}+\eps\f =h,
\end{equation*}
which we may write as a complex Monge-Amp\`{e}re equation
\begin{equation}\label{eq:MA}
  (dd^{c} \f)^{n}=e^{-\eps\f}e^hV_{0}.
\end{equation}

Conversely, if $\f$ is a smooth function satisfying the previous equation for some given pluriharmonic function $h$, and if $\omega =dd^{c}\f$ is positive definite, we let the reader verify that $\omega$ is a K\"{a}hler-Einstein metric with Einstein constant $\eps\pi$.

\subsubsection{Boundary conditions}
\label{subsec:nonpositive}

Let $\rho$ be a boundary defining function for $\Omega$, as described in section~\ref{subsec:clevi}. Recall that $L$ is the Levi form associated to $\rho$. The $(1,1)$-form associated to $L$, that is $L(J\cdot,\cdot)$, is equal to $\pi dd^{c}\rho$ with our normalization conventions. Let now $\f$ be a smooth real valued function defined on $\bar{\Omega}$. On a collar neighborhood $[-\delta , 0]\times \partial \Omega$ of $\partial \Omega$ (where $\delta >0$ is fixed), we can write the expansion of $\f$ in powers of $\rho$ as follows: for all $N\in \NN$,
\begin{equation}\label{eq:DL}
  \f = \f_{0} + \rho \f_{1} + \rho^{2} \f_2 + \dotsb +\rho^{N} \f_{n} + o(\rho^{N}).
\end{equation}
Here, the functions $\f_{i}$ are initially defined on $\{ 0\}\times \partial \Omega\simeq \partial \Omega$, but we can view them as functions defined on the collar neighborhood $[-\delta ,0]\times \partial \Omega$ by setting, with obvious notations, $\f_{i}(\rho ,x)= \f_{i}(0,x)$. Thus, we have for example $\f_{0} =0$ if $\f \vert_{\partial \Omega} =0$. From the expansion \eqref{eq:DL}, we get
\begin{equation*}
  dd^{c} \f = dd^{c} \f_{0} +\f_{1} dd^{c} \rho +d \rho \wedge d^{c} \f_{1}+ (d\f_{1}+2\f_2 d\rho )\wedge d^{c}\rho+O(\rho ).
\end{equation*}
Using the fact that $d\rho =d^{c}\rho = 0$ on the Levi distribution $H$ (see the characterization \eqref{eq:contact} of $H$), the previous expansion implies
\begin{equation*}
  dd^{c}\f \vert_H= dd^{c}\f_{0}\vert_H+\f_{1} dd^{c}\rho.
\end{equation*}
In particular, if $\f_{0}=0$, or more generally if $dd^{c}\f_{0}=0$, then $dd^{c}\f \vert_H$ is conformal to the Levi form.\\

Consider now the following geometrical problem: find a K\"{a}hler-Einstein metric $\omega$ on $\Omega$ such that its restriction to the Levi distribution is conformal to the Levi form. Our previous discussion shows that in order to solve this problem, it is enough to solve the following analytical problem: find a function $\f$ such that
\begin{enumerate}
 \item $dd^{c}\f$ is positive definite,
 \item $\f$ satisfies the Monge-Amp\`{e}re equation \eqref{eq:MA},
 \item $\f$ satisfies the Dirichlet boundary condition on $\partial \Omega$, i.e. $\f \vert_{\partial \Omega}=0$.
\end{enumerate}
Indeed, the form $\omega =dd^{c}\f$ is then a solution to the geometrical problem. Note that in the case of nonpositive Ricci curvature, which corresponds to $\eps =0$ or $-1$ in equation \eqref{eq:MA}, the geometrical problem  always has a solution by \cite[Theorem 1.1]{CKNS85}. We will therefore consider only the positive curvature case ($\eps =1$).

% ----------------------------------------------------------------

\subsection{The strategy}
\label{subsec:strategy}

In the sequel we let $\Omega=\{ \rho < 0\} \subset \CC^{n}$ be a bounded strongly pseudoconvex domain
and
$\mu$ denote the euclidean Lebesgue volume form in $\CC^{n}$, normalized so that
\begin{equation*}
  \mu(\Omega)=1.
\end{equation*}
We consider the
following Dirichlet problem
\begin{equation*}
(MA) \hskip2cm
(dd^{c} \f)^{n}=\frac{e^{-\f} \mu}{\int_{\Omega} e^{-\f} d \mu}
\; \text{ in } \Omega
\hskip.4cm
\text{ with }
\hskip.4cm
\f_{|\partial \Omega}=0,
\end{equation*}
where $\f$ is strictly plurisubharmonic and $\mathcal{C}^{\infty}$-smooth up to the boundary of $\Omega$.

We are going to solve $(MA)$ by the \emph{an iterative process}, solving for each $j \in \NN$
the Dirichlet problem
\begin{equation*}
(MA)_{j} \hskip1cm
(dd^{c} \f_{j+1})^{n}=\frac{e^{-\f_{j}} \mu}{\int_{\Omega} e^{-\f_{j}} d \mu}
\; \text{ in } \Omega
\hskip.4cm
\text{ with }
\hskip.4cm
{\f_{j+1}}_{|\partial \Omega}=0,
\end{equation*}
where $\f_{0}=\rho$ (we could actually start from any smooth plurisubharmonic initial data
$\f_{0}$ with zero boundary values).

It follows from the work of Cafarelli-Kohn-Nirenberg-Spruck \cite{CKNS85} that the Dirichlet
problem $(MA)_{j}$ admits a unique plurisubharmonic solution $\f_{j}$ which is smooth up to the boundary.
 We are going to show that a subsequence of the sequence $(\f_{j})$ converges in $\mathcal{C}^{\infty}(\bar{\Omega})$
towards a solution $\f$ of $(MA)$.

In a compact setting this approach coincides with the time-one discretization of the K\"{a}hler-Ricci
flow and was first considered by Keller \cite{Kel09} and Rubinstein \cite{Rub08}
(see also \cite{BBEGZ11}).

\begin{rem}
As the proof will show, our result actually holds for any (normalized) volume form $\mu$
and with more general boundary values.
\end{rem}

% ----------------------------------------------------------------
% ----------------------------------------------------------------

\section{Energy estimates}
\label{sec:energy_estimates}

We now move on to showing that the sequence $(\f_{j})$ is relatively compact
in $\mathcal{C}^{\infty} (\bar{\Omega})$.
The proof reduces to establishing
\emph{a priori estimates}. We first show that one has a uniform a priori control on the
\emph{energy} of the solutions.

% ----------------------------------------------------------------

\subsection{Local Moser-Trudinger inequality}

The following local Moser-Trudinger type inequality is of independent interest.\footnote{While
we were finishing the writing of this paper, two preprints appeared \cite{BB11,Ceg11} which propose similar inequalities with different and interesting proofs.}

\begin{thm} \label{thm:MT}
There exists $0<\beta_{n}<1$ and $C>0$ such that for all smooth plurisubharmonic functions $\f$ in $\Omega$ with
$\f_{|\partial \Omega} =0$,
\begin{equation*}
  \int_{\Omega} e^{-\f} d \mu \leq C \exp \left( \beta_{n} \abs{\mathcal{E}(\f)} \right),
\end{equation*}
where $\mathcal{E}(\f)=\frac{1}{n+1} \int_{\Omega} \f \, (dd^{c} \f)^{n}$.
\end{thm}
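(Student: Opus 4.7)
The plan is to follow the Kolodziej--Zeriahi philosophy: control the sublevel sets of $\f$ by its Monge--Amp\`ere energy via capacity, then integrate by layer-cake and optimize. Since $\f$ is plurisubharmonic with $\f_{|\partial\Omega}=0$, the maximum principle forces $\f\leq 0$, so $|\mathcal{E}(\f)|=\frac{1}{n+1}E(\f)$ where $E(\f):=\int_\Omega(-\f)(dd^c\f)^n\geq 0$; this is the quantity I actually work with.

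\textbf{Step 1 (capacity of sublevel sets).} For $t>0$ I would feed the admissible test function $\max(\f/t,-1)\in\mathrm{PSH}(\Omega)\cap[-1,0]$ into the Bedford--Taylor comparison principle to obtain a pluri-Chebyshev bound of the form
\begin{equation*}
  \ca(\{\f<-t\},\Omega)\leq \frac{C}{t^n}\int_{\{\f<-t/2\}}(dd^c\f)^n\leq \frac{C'\,E(\f)}{t^{n+1}},
\end{equation*}
the last step being Chebyshev applied to the positive measure $(-\f)(dd^c\f)^n$.

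\textbf{Step 2 (measure vs.\ capacity).} Since $\Omega$ is strongly pseudoconvex, it is hyperconvex, so the Alexander--Taylor/Zeriahi inequality supplies universal constants $C_{0},\alpha_n>0$ with
\begin{equation*}
  \mu(K)\leq C_{0}\exp\!\bigl(-\alpha_n/\ca(K,\Omega)^{1/n}\bigr)
\end{equation*}
for every compact $K\subset\Omega$. Inserting Step~1 gives a Gaussian-like tail
\begin{equation*}
  \mu(\{\f<-t\})\leq C_{0}\exp\!\bigl(-c_n\,t^{(n+1)/n}/E(\f)^{1/n}\bigr),\qquad c_n:=\alpha_n/(C')^{1/n}.
\end{equation*}

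\textbf{Step 3 (layer-cake and Laplace method).} The identity
\begin{equation*}
  \int_\Omega e^{-\f}\,d\mu=\mu(\Omega)+\int_0^\infty e^{t}\,\mu(\{\f<-t\})\,dt
\end{equation*}
combined with Step~2 leaves an integral governed by the exponent $\Psi(t):=t-c_n t^{(n+1)/n}/E(\f)^{1/n}$. A direct optimisation gives $t_\ast=[n/(c_n(n+1))]^{n}E(\f)$ and $\Psi(t_\ast)=t_\ast/(n+1)$, while the second derivative is $\asymp 1/(nt_\ast)$, so the Laplace integral is at most a polynomial factor in $E(\f)$ times $\exp(\Psi(t_\ast))$. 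Absorbing this polynomial factor yields
\begin{equation*}
  \int_\Omega e^{-\f}\,d\mu\leq C\exp\!\Bigl([n/(c_n(n+1))]^{n}\,\abs{\mathcal{E}(\f)}\Bigr),
\end{equation*}
which is the desired inequality with $\beta_n=[n/(c_n(n+1))]^n$.

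\textbf{Main obstacle.} The qualitative estimate is routine; the delicate point is securing $\beta_n<1$, which is exactly the inequality $c_n>n/(n+1)$, hence an explicit lower bound on the constant $\alpha_n$ appearing in Zeriahi's volume--capacity comparison. That sharpness is what I expect to be the hard part: the capacity estimate of Step~1 leaves room, but the constant in the measure--capacity step is rigid. I would try either (i) a direct $L^{2}$-estimate for $\dbar$ applied to the pluricomplex Green/extremal function of $\Omega$ to produce a concrete $\alpha_{n}$, or (ii) a more refined Chebyshev--capacity iteration that gains an extra factor and yields the strict inequality $\beta_{n}<1$, possibly at the cost of suboptimal constants; either approach suffices for the properness application that follows.
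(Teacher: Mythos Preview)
Your approach is exactly the one the paper takes: capacity bound on sublevel sets in terms of the energy, Zeriahi's volume--capacity inequality, layer-cake, then Laplace-type optimisation of $t\mapsto t-\lambda t^{1+1/n}$. The only difference is that you treat the constants as unknown and flag $\beta_n<1$ as the ``main obstacle'', whereas in the paper the constants are explicit from the start and the obstacle evaporates.

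Concretely: the capacity estimate you sketch in Step~1 is available with \emph{no} extraneous constant. Lemma~2.2 of \cite{ACKPZ09} gives directly
\begin{equation*}
  \ca(\{\f<-t\},\Omega)\leq \frac{(n+1)\,|\mathcal{E}(\f)|}{t^{n+1}}=\frac{E(\f)}{t^{n+1}},
\end{equation*}
so in your notation $C'=1$ and hence $c_n=\alpha_n$. For Step~2, Zeriahi's inequality \cite{Zer01} holds with any $\alpha_n=\gamma<2$ (and \cite{ACKPZ09} even allows $\gamma<2n$). Plugging these into your formula gives
\begin{equation*}
  \beta_n=\Bigl(\frac{n}{\gamma(n+1)}\Bigr)^{n},
\end{equation*}
and already the crude choice $\gamma=1$ yields $\beta_n=(n/(n+1))^n<1$ for every $n\geq 1$. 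No $\bar\partial$-estimates or iterated Chebyshev are needed; the sharp versions of the two black boxes you invoke already do the job. Your two-step Chebyshev in Step~1 would introduce a spurious factor (roughly $2C$), which is why you perceived a difficulty; citing the sharp lemma removes it.
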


We refer the reader to \cite{Mos71,Ono82,Tia97,Tia00,CL04,PSSW08} for related results both in a local and global context.
The proof we propose is new and relies on pluripotential techniques, as developed in
\cite{BT82,Kol98,Ceg98,Zer01,GZ05,BGZ09}.

\begin{proof}
Recall that the Monge-Amp\`{e}re capacity has been introduced by Bedford and Taylor in \cite{BT82}. By definition
the capacity of a compact subset $K \subset \Omega$ is
\begin{equation*}
  Cap(K):=\sup \set{\int_K (dd^{c} u)^{n} ; \; u \text{ plurisubharmonic in } \Omega \text{ with } 0 \leq u \leq 1}.
\end{equation*}

We will use the following useful inequalities. For any $\gamma<2$
there exists $C_{\gamma}>0$ such that for all $K \subset \Omega$,
\begin{equation}
  \mu(K) \leq C_{\gamma} \exp \left[ -\frac{\gamma}{Cap(K)^{1/n}} \right]
\end{equation}
(see e.g. \cite{Zer01}). For all smooth plurisubharmonic functions $\f$ in $\Omega$
with zero boundary values, for all $t>0$,
\begin{equation*}
  Cap(\f<-t) \leq \frac{(n+1) \abs{\mathcal{E}(\f)}}{t^{n+1}},
\end{equation*}
where
\begin{equation*}
  \mathcal{E}(\f):=\frac{1}{n+1}\int_{\Omega} \f (dd^{c} \f)^{n}.
\end{equation*}
For the latter inequality, we refer the reader to Lemma 2.2 in~\cite{ACKPZ09}. We infer
\begin{equation*}
  \int_{\Omega} e^{-\f} d \mu=-1+\int_{0}^{+\infty} e^{t} \mu(\f<-t) dt \leq C \int_{0}^{+\infty} \exp(t-\lambda t^{1+1/n}) dt,
\end{equation*}
where
\begin{equation*}
  \lambda:=\frac{\gamma}{(n+1)^{1/n} \abs{\mathcal{E}(\f)}^{1/n}}.
\end{equation*}

We let the reader check that the function $h(t)=t-\lambda t^{1+1/n}$ attains its maximum value at
point $t_c=\lambda^{-n} (1+1/n)^{-n}$. Moreover $h(t) \leq -t$ for $t \geq 4^{n} t_c$. This shows that
\begin{equation*}
\begin{split}
 \int_{0}^{+\infty} \exp(t-\lambda t^{1+1/n}) dt & \le 4^{n} t_c \exp(h(t_c))+\int_{4^{n} t_c}^{+\infty} \exp(-t) dt \\
  & \le 4^{n} t_c \exp \left( \frac{t_c}{n+1} \right)+1.
\end{split}
\end{equation*}
Using the definition of $\lambda$ and the formula defining $t_c$, we arrive at
\begin{equation*}
  \int_{0}^{+\infty} \exp(t-\lambda t^{1+1/n}) dt \leq c_{n} \abs{\mathcal{E}(\f)} \exp\left( {\beta_{n}'} \abs{\mathcal{E}(\f)} \right)+1,
\end{equation*}
where
\begin{equation*}
  \beta_{n}'=\frac{1}{\gamma^{n} (1+1/n)^{n}}.
\end{equation*}
We can fix e.g. $\gamma=1$ so that $\beta_{n}'<1$ for all $n \geq 1$. Moreover the desired inequality is obtained by choosing
$\beta_{n}$ so that $\beta_{n}'<\beta_{n}<1$ and enlarging the constant $C$.
\end{proof}

\begin{rem} \label{rem:biggervalues}
Note for later use that the same proof yields an inequality
\begin{equation} \label{eq:maxRicci}
  \int_{\Omega} e^{-A \f} d \mu \leq C_A \exp( \beta_A \abs{\mathcal{E}(\f)} ),
\end{equation}
where
\begin{equation*}
  \beta_A:=\frac{A^{n+1}}{\gamma^{n} (1+1/n)^{n}}
\end{equation*}
is smaller than $1$ only if $A=A_{n}$ is not too large. When $n=1$, the critical value is $A=2$.
This is related to a theorem of Bishop as we shall see in section~\ref{subsec:conclusion}.

It follows from the recent work \cite{ACKPZ09} that the optimal exponent $\gamma$ is actually $2n$,
improving the bound $2$ obtained in \cite{Zer01}, hence also enlarging the allowed constant
$A_{n}$ above, when $n>1$.
\end{rem}

% ----------------------------------------------------------------

\subsection{Properness}\label{subsec:properness}

We let
\begin{equation*}
  \mathcal{E}(\f):=\frac{1}{n+1} \int_{\Omega} \f (dd^{c} \f)^{n}
\end{equation*}
denote the \emph{energy} of a plurisubharmonic function $\f$ and set
\begin{equation*}
  \mathcal{F}(\f):=\mathcal{E}(\f)+\log \left[ \int_{\Omega} e^{-\f} d \mu \right].
\end{equation*}

Recall that the energy functional is a primitive of the complex Monge-Amp\`{e}re operator, namely if
$\p_s$ is a curve of plurisubharmonic functions with zero boundary values, then
\begin{equation*}
  \frac{d\mathcal{E}(\p_s)}{ds}=\int_{\Omega} \dot{\p_s} (dd^{c} \p_s)^{n},
\end{equation*}
as follows from Stokes theorem. A similar computation shows that a function $\f$ solves $(MA)$ if
and only if it is a critical point of the functional $\mathcal{F}$ (in other words
$(MA)$ is the Euler-Lagrange equation for $\mathcal{F}$).

Inspired by techniques from the calculus of variations, it is thus natural to try and maximize the functional
$\mathcal{F}$ so as to build a critical point. This usually requires the functional to be \emph{proper} in order
to be able to restrict to compact subsets of the space of functions involved.
It follows from the Moser-Trudinger inequality (Theorem~\ref{thm:MT}) that the functional $\mathcal{F}$ is
indeed \emph{proper},
in the following strong sense:

\begin{prop} \label{prop:proper}
There exists $a>0, b \in \RR$ such that for all smooth plurisubharmonic function $\p$ in $\Omega$, with zero boundary values,
\begin{equation*}
  \mathcal{F}(\p) \leq a \mathcal{E}(\p)+b.
\end{equation*}
\end{prop}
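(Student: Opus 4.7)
The plan is essentially to combine the Moser-Trudinger inequality (Theorem~\ref{thm:MT}) with the sign of the energy. First I would observe that any smooth plurisubharmonic function $\p$ on $\Omega$ with $\p_{|\partial\Omega}=0$ satisfies $\p\le 0$ throughout $\Omega$ by the maximum principle for plurisubharmonic functions. Since $(dd^{c}\p)^{n}$ is a nonnegative measure, this gives $\mathcal{E}(\p) = \frac{1}{n+1}\int_{\Omega}\p\,(dd^{c}\p)^{n}\le 0$, so $\abs{\mathcal{E}(\p)}=-\mathcal{E}(\p)$.

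Next I would plug $\p$ into Theorem~\ref{thm:MT} and take the logarithm, yielding
\begin{equation*}
  \log\!\left[\int_{\Omega}e^{-\p}\,d\mu\right]\le \log C + \beta_{n}\abs{\mathcal{E}(\p)} = \log C -\beta_{n}\mathcal{E}(\p).
\end{equation*}
Adding $\mathcal{E}(\p)$ to both sides gives
\begin{equation*}
  \mathcal{F}(\p) = \mathcal{E}(\p)+\log\!\left[\int_{\Omega}e^{-\p}\,d\mu\right] \le (1-\beta_{n})\mathcal{E}(\p)+\log C.
\end{equation*}
The conclusion follows with $a:=1-\beta_{n}>0$ (which is the crucial point, using that the Moser-Trudinger constant satisfies $\beta_{n}<1$) and $b:=\log C$.

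There is no real obstacle here since the work has been done in Theorem~\ref{thm:MT}; the only thing to keep in mind is that the strict inequality $\beta_{n}<1$ is exactly what allows $a$ to be positive, and hence the functional $\mathcal F$ to be proper (in the sense that $\mathcal F(\p)\to -\infty$ as $\mathcal{E}(\p)\to -\infty$, which is the non-compact direction in the space of psh functions vanishing on $\partial\Omega$). I would note in passing that this is also the reason why the constant $A=1$ appearing in the exponent of $(MA)$ must be less than the critical value $A_{n}$ of Remark~\ref{rem:biggervalues}; for larger Einstein constants one would lose properness.
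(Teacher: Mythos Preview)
Your proof is correct and follows exactly the same route as the paper: take logarithms in Theorem~\ref{thm:MT}, use $\abs{\mathcal{E}(\p)}=-\mathcal{E}(\p)$, and set $a=1-\beta_{n}>0$, $b=\log C$. The paper's own proof is the one-line statement ``Immediate consequence of Theorem~\ref{thm:MT} with $a=1-\beta_{n}$ and $b=\log C$''; your version just spells out the intermediate steps and adds the (accurate) contextual remarks.
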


\begin{proof}
Immediate consequence of Theorem~\ref{thm:MT} with $a=1-\beta_{n}$ and $b=\log C$.
\end{proof}

% ----------------------------------------------------------------

\subsection{Ricci inverse iteration}

Given $\f \in PSH(\Omega) \cap \mathcal{C}^{\infty}(\bar{\Omega})$ with zero boundary values,
it follows from the work of Cafarelli, Kohn, Nirenberg and Spruck \cite{CKNS85} that there exists a unique
function $\p \in PSH(\Omega) \cap \mathcal{C}^{\infty}(\bar{\Omega})$ with zero boundary values
such that
\begin{equation*}
(dd^{c} \p)^{n}=\frac{e^{-\f} \mu}{\int_{\Omega} e^{-\f} d \mu}
\text{ in } \Omega.
\hskip2cm (*)
\end{equation*}
We let
\begin{equation*}
  \mathcal{T}:=\set{\f \in PSH(\Omega) \cap \mathcal{C}^{\infty}(\bar{\Omega}) \, | \, \f_{|\partial \Omega} =0}
\end{equation*}
denote the space of test functions and
\begin{equation*}
  T: \f \in \mathcal{T} \mapsto \p \in \mathcal{T}
\end{equation*}
denote the operator such that $\p=T(\f)$ is the unique solution of $(*)$.
Observe that solving $(MA)$ is equivalent to finding a fixed point of $T$.

The key to the dynamical construction
of solutions to $(MA)$ lies in the following monotonicity property:

\begin{prop} \label{prop:monotonediscrete}
For all $\f \in \mathcal{T}$,
\begin{equation*}
  \mathcal{F}(T \f) \geq \mathcal{F}(\f)
\end{equation*}
with strict inequality unless $T\f=\f$.
\end{prop}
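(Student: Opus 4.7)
The plan is to directly compare $\mathcal{F}(\psi)$ with $\mathcal{F}(\varphi)$, where $\psi = T\varphi$, by exploiting two well-known convexity facts: concavity of the energy functional $\mathcal{E}$, and Jensen's inequality applied to the probability measure $e^{-\varphi}d\mu / Z$, where $Z := \int_\Omega e^{-\varphi} d\mu$. Writing
\begin{equation*}
\mathcal{F}(\psi) - \mathcal{F}(\varphi) = \bigl[\mathcal{E}(\psi) - \mathcal{E}(\varphi)\bigr] + \log\frac{\int_\Omega e^{-\psi}d\mu}{\int_\Omega e^{-\varphi}d\mu},
\end{equation*}
I will bound the two bracketed terms from below by quantities that exactly cancel.

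First I would establish concavity of $\mathcal{E}$ on the affine space of smooth strictly plurisubharmonic functions with zero boundary values. Using the first variation formula recalled just before the proposition and one integration by parts, the second derivative of $t \mapsto \mathcal{E}\bigl(\varphi + t(\psi-\varphi)\bigr)$ equals $-n\int_\Omega d(\psi-\varphi)\wedge d^c(\psi-\varphi)\wedge (dd^c\varphi_t)^{n-1}$, which is $\le 0$ (and strictly negative when $\psi \neq \varphi$) because $dd^c\varphi_t$ is a Kähler form and $du\wedge d^c u \ge 0$ for real $u$. Concavity yields the tangent-line inequality
\begin{equation*}
\mathcal{E}(\psi) - \mathcal{E}(\varphi) \;\ge\; \int_\Omega (\psi-\varphi)\,(dd^c\psi)^n \;=\; \frac{1}{Z}\int_\Omega (\psi-\varphi)\,e^{-\varphi}\,d\mu,
\end{equation*}
where the last equality is precisely the defining equation $(*)$ for $\psi=T\varphi$.

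Next, since $d\nu := Z^{-1}e^{-\varphi}d\mu$ is a probability measure on $\Omega$, I apply Jensen's inequality to the concave function $\log$:
\begin{equation*}
\log \int_\Omega e^{-\psi}d\mu \;=\; \log Z + \log \int_\Omega e^{\varphi-\psi}\, d\nu \;\ge\; \log Z + \int_\Omega (\varphi - \psi)\, d\nu.
\end{equation*}
Subtracting $\log Z = \log \int e^{-\varphi}d\mu$ and adding to the concavity inequality above, the two integrals $\pm \frac{1}{Z}\int(\psi-\varphi)e^{-\varphi}d\mu$ cancel exactly, giving $\mathcal{F}(\psi) \ge \mathcal{F}(\varphi)$.

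The main (only) subtlety is strictness. Equality throughout forces equality in Jensen, which means $\varphi - \psi$ is constant $\nu$-almost everywhere; since $e^{-\varphi} > 0$ on $\Omega$, $\nu$ has full support, so $\psi - \varphi$ is constant on $\Omega$, and the shared Dirichlet condition $\psi|_{\partial\Omega} = \varphi|_{\partial\Omega} = 0$ forces that constant to be $0$, i.e. $T\varphi = \varphi$. (Alternatively the strict concavity of $\mathcal{E}$ observed above would already give strictness when $\psi \neq \varphi$.) No step demands more than standard integration by parts on $\bar{\Omega}$ together with the $\mathcal{C}^\infty$ regularity of $\varphi,\psi$ provided by \cite{CKNS85}, so no genuine obstacle is expected.
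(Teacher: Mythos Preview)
Your proof is correct and follows essentially the same route as the paper's: both arguments combine the inequality $\mathcal{E}(\psi)-\mathcal{E}(\varphi)\ge\int_\Omega(\psi-\varphi)(dd^c\psi)^n$ (which you obtain via concavity of $\mathcal{E}$ along the segment, the paper via the explicit cocycle formula for $\mathcal{E}(\psi)-\mathcal{E}(\varphi)$ plus Stokes---the same integration by parts in either case) with Jensen's inequality for the logarithmic term (which the paper phrases as nonnegativity of the relative entropy $\int F\log F\,d\mu_\psi$). Your treatment of the equality case via the boundary condition is likewise identical to the paper's.
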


\begin{proof}
Fix $\f \in \mathcal{T}$ and set $\p:=T \f$. Recall that
\begin{equation*}
\mathcal{F}(\f)=\mathcal{E}(\f)+\log \left[ \int_{\Omega} e^{-\f} d\mu \right]
\end{equation*}
and
\begin{equation*}
  \mathcal{E}(\p)-\mathcal{E}(\f)=\frac{1}{n+1} \sum_{j=0}^{n} \int_{\Omega} (\p-\f) (dd^{c} \p)^j \wedge (dd^{c} \f)^{n-j}.
\end{equation*}

It follows from Stokes theorem that for all $j$,
\begin{multline*}
  \int (\p-\f) (dd^{c} \p)^j \wedge (dd^{c} \f)^{n-j} =\int (\p-\f) (dd^{c} \p)^{n} \\
  + \int d(\p-\f) \wedge d^{c} (\p-\f) \wedge S,
\end{multline*}
where $S$ is a positive closed form of bidegree $(n-1,n-1)$. Thus
\begin{equation*}
  \mathcal{E}(\p)-\mathcal{E}(\f) \geq \frac{1}{n+1} \int_{\Omega} (\p-\f) (dd^{c} \p)^{n}.
\end{equation*}

We now set
\begin{equation*}
  \tilde{\f}:=\f+\log [ \int e^{-\f} d\mu ], \qquad \tilde{\p}:=\p+\log [ \int e^{-\p} d\mu ],
\end{equation*}
and
\begin{equation*}
  \mu_{\f}:=e^{-\tilde{\f}} \mu , \qquad \mu_{\p}:=e^{-\tilde{\p}} \mu.
\end{equation*}
Note that the latter are probability measures in $\Omega$ with $(dd^{c} \p)^{n}=\mu_{\f}$.

It follows from the definition of $\mathcal{F}$ and our last inequality that
\begin{equation*}
  \mathcal{F}(\p)-\mathcal{F}(\f) \geq \int_{\Omega} (\tilde{\p}-\tilde{\f}) d \mu_{\f} = \int_{\Omega} F \log F \, d \mu_{\p},
\end{equation*}
where $F=e^{\tilde{\p}-\tilde{\f}}$, hence the latter quantity denotes the relative entropy of the
probability measures $\mu_{\f},\mu_{\p}$. It follows from the convexity of $-\log$ that
\begin{equation*}
  \int_{\Omega} - \log [ F^{-1}] \, F d \mu_{\p} \geq -\log \left[ \int_{\Omega} F^{-1} F d \mu_{\p} \right]=0,
\end{equation*}
with strict inequality unless $F=1$ almost everywhere, i.e. $\tilde{\f}=\tilde{\p}$.

Observe finally that since $\p$ and $\f$ both have zero boundary values, the equality
$\tilde{\f}=\tilde{\p}$ can only occur when $\f \equiv \p$, i.e. when
$\f=T \f$ is a fixed point of $T$, as claimed.
\end{proof}

We infer that the energies $\mathcal{E}(\f_{j})$ of the solutions $\f_{j}$ of $(MA)_{j-1}$ are uniformly bounded:

\begin{cor}
The sequence $(\mathcal{F}(T^j \f_{0}))_{j}$ is bounded, hence so is
$(\mathcal{E}(T^j \f_{0})_{j}$.
\end{cor}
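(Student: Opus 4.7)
The plan is to combine the monotonicity of the discrete Ricci inverse iteration (Proposition \ref{prop:monotonediscrete}) with the properness of $\mathcal{F}$ (Proposition \ref{prop:proper}) and the sign of the energy on test functions.

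\textbf{Step 1: upper bound on $\mathcal{F}$ on $\mathcal{T}$.} The key observation is that any $\psi\in\mathcal{T}$ satisfies $\psi\le 0$ in $\Omega$, by the maximum principle for plurisubharmonic functions with zero boundary values. Since $(dd^{c}\psi)^{n}$ is a positive measure, this forces
\begin{equation*}
\mathcal{E}(\psi)=\frac{1}{n+1}\int_{\Omega}\psi\,(dd^{c}\psi)^{n}\le 0.
\end{equation*}
Proposition \ref{prop:proper} (with $a=1-\beta_{n}>0$) therefore yields
\begin{equation*}
\mathcal{F}(\psi)\le (1-\beta_{n})\,\mathcal{E}(\psi)+\log C\le \log C
\end{equation*}
for every $\psi\in\mathcal{T}$. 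In particular $\mathcal{F}$ is bounded from above on the whole of $\mathcal{T}$.

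\textbf{Step 2: boundedness of $(\mathcal{F}(T^{j}\f_{0}))$.} Proposition \ref{prop:monotonediscrete} says that $j\mapsto \mathcal{F}(T^{j}\f_{0})$ is non-decreasing, so
\begin{equation*}
\mathcal{F}(\f_{0})\;\le\;\mathcal{F}(T^{j}\f_{0})\;\le\;\log C
\qquad\text{for all }j\in\NN.
\end{equation*}
This gives the desired boundedness of the sequence $(\mathcal{F}(T^{j}\f_{0}))_{j}$.

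\textbf{Step 3: boundedness of $(\mathcal{E}(T^{j}\f_{0}))$.} Setting $\f_{j}:=T^{j}\f_{0}\in\mathcal{T}$ we already know $\mathcal{E}(\f_{j})\le 0$ by Step 1. For the lower bound, rewrite the properness inequality as
\begin{equation*}
\mathcal{E}(\f_{j})\;\ge\;\frac{\mathcal{F}(\f_{j})-\log C}{1-\beta_{n}}\;\ge\;\frac{\mathcal{F}(\f_{0})-\log C}{1-\beta_{n}},
\end{equation*}
using $\mathcal{F}(\f_{j})\ge\mathcal{F}(\f_{0})$ from monotonicity. Hence $(\mathcal{E}(\f_{j}))_{j}$ is bounded both above and below, concluding the proof.

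There is no real obstacle here: the entire corollary is a one-line consequence of assembling the pieces already in hand, and the only subtlety worth flagging is the use of the maximum principle to get $\psi\le 0$, which turns the properness inequality into a genuine upper bound on $\mathcal{F}$ (and not merely a coercivity statement).
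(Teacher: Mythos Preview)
Your proof is correct and follows essentially the same argument as the paper: both combine the non-positivity of $\mathcal{E}$ on $\mathcal{T}$ (from $\f_{j}\le 0$), the monotonicity of $j\mapsto\mathcal{F}(T^{j}\f_{0})$ from Proposition~\ref{prop:monotonediscrete}, and the properness inequality of Proposition~\ref{prop:proper} to obtain the chain $\mathcal{F}(\f_{0})\le\mathcal{F}(\f_{j})\le a\,\mathcal{E}(\f_{j})+b\le b$. Your write-up is simply a more explicit unpacking of the same one-line argument.
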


\begin{proof}
Fix $\f_{0} \in \mathcal{T}$ (for example $\f_{0}=\rho$) and set $\f_{j}=T^j \f_{0}$.
Observe that $\mathcal{E}(\f_{j}) \leq 0$ since
$\f_{j} \leq 0$, hence it suffices to establish a bound from below.
 The previous proposition insures that the sequence $\mathcal{F}(T^j \f_{0}))_{j}$
is increasing. It follows from Proposition~\ref{prop:proper} that
\begin{equation*}
\mathcal{F}(\f_{0}) \leq \mathcal{F}(T^j \f_{0}) \leq a \mathcal{E}(T^j \f_{0})+b \leq b
\end{equation*}
so that the energies $\mathcal{E}(T^j \f_{0})$ are uniformly bounded.
\end{proof}

% ----------------------------------------------------------------
% ----------------------------------------------------------------

\section{Higher order estimates}
\label{sec:higher_order_estimates}

% ----------------------------------------------------------------

\subsection{Uniform a priori estimates}

Recall that $\f_{j}$ is a smooth plurisubharmonic solution of $(MA)_{j-1}$. Its Monge-Amp\`{e}re measure thus satisfies
\begin{equation*}
  (dd^{c} \f_{j})^{n}=f_{j} \mu, \text{ with } f_{j}=\frac{e^{-\f_{j-1}} \mu}{\int_{\Omega} e^{-\f_{j-1}} d \mu}.
\end{equation*}
It follows from the previous section that the $\f_{j}'$s have uniformly bounded energy.
Thus they form a relatively compact family (for the $L^1$-topology) in the class
$\mathcal{E}^1(\Omega)$ of plurisubharmonic functions with finite energy (see \cite{BGZ09}). When
the complex dimension is $n=1$, the latter is the class of negative plurisubharmonic functions with
zero boundary values and whose gradient is in $L^{2}$; since (normalized) plurisubharmonic functions
are uniformly $L^{2}$, the family $(\f_{j})$ is thus included in a finite ball of the Sobolev space $W^{1,2}$.
In higher dimension, the class $\mathcal{E}^1(\Omega)$ is a convenient substitute for the Sobolev spaces,
we refer the reader to \cite{BGZ09} for more details.

We simply recall here that functions in $\mathcal{E}^1(\Omega)$ have zero Lelong numbers.
For such a function $\p$, Skoda's integrability theorem \cite{Sko72} ensures that $e^{-\p}$ is
in $L^q$ for all $q>1$. Since the family $(\f_{j})$ is moreover relatively compact,
Skoda's uniform integrability theorem
\cite{Zer01} insures that the densities $f_{j}$'s satisfy
\begin{equation*}
  \int_{\Omega} f_{j}^{2} d \mu \leq C
\end{equation*}
for some uniform constant $C>0$.

Recall now the following fundamental result due to Kolodziej \cite{Kol98}:
if $\p$ is a smooth plurisubharmonic function in $\Omega$ with zero boundary
values and such that
\begin{equation*}
  (dd^{c} \p)^{n}=f d\mu
\end{equation*}
where $f \in L^{2}(\mu)$, then
\begin{equation*}
  \norm{\p}_{L^{\infty}(\Omega)} \leq C_f,
\end{equation*}
where the constant $C_f$ only depends on $\Omega$ and $\norm{f}_{L^{2}}$.
Applying this to $\p=\f_{j}$ yields:

\begin{lem} \label{lem:C0}
For all $j \in \NN$,
\begin{equation} \label{eq:uniform}
  -C_{0} \leq \f_{j} \leq 0
\end{equation}
for some uniform constant $C_{0}>0$.
\end{lem}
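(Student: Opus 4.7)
The statement to prove is that $-C_0 \le \varphi_j \le 0$ uniformly in $j$. The plan is almost a direct assembly of the tools recalled just above the lemma, so I would proceed as follows.

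First, the upper bound $\varphi_j \le 0$ is immediate: each $\varphi_j$ is plurisubharmonic on $\Omega$ and vanishes on $\partial \Omega$, so the maximum principle (subharmonicity of psh functions) gives $\varphi_j \le 0$ throughout $\Omega$.

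For the lower bound, I would combine the three ingredients the authors have just set up. The corollary at the end of the previous subsection provides a uniform bound $|\mathcal{E}(\varphi_j)| \le E_0$. This gives relative compactness of $(\varphi_j)$ in $\mathcal{E}^1(\Omega)$ for the $L^1$-topology, and every cluster value is again a function in $\mathcal{E}^1(\Omega)$, hence has zero Lelong numbers. Skoda's uniform integrability theorem (as quoted from \cite{Zer01}) then applies to this precompact family $(\varphi_{j-1})$, yielding a uniform constant $C$ such that
\begin{equation*}
  \int_{\Omega} e^{-2\varphi_{j-1}}\, d\mu \le C'
\end{equation*}
and therefore
\begin{equation*}
  \int_{\Omega} f_j^{2}\, d\mu \;=\; \frac{\int_{\Omega} e^{-2\varphi_{j-1}}\, d\mu}{\left( \int_{\Omega} e^{-\varphi_{j-1}}\, d\mu \right)^{2}} \;\le\; C,
\end{equation*}
the denominator being bounded below by $1$ since $\mu(\Omega)=1$ and $\varphi_{j-1}\le 0$ implies $e^{-\varphi_{j-1}}\ge 1$.

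With the uniform $L^{2}(\mu)$-bound on the densities $f_j$ in hand, I would invoke Kolodziej's $L^\infty$-estimate \cite{Kol98}: since $\varphi_j$ is smooth psh on $\Omega$ with zero boundary values and $(dd^c \varphi_j)^n = f_j \mu$ with $\|f_j\|_{L^2(\mu)} \le \sqrt{C}$, we get $\|\varphi_j\|_{L^\infty(\Omega)} \le C_0$ with $C_0$ depending only on $\Omega$ and $\sqrt{C}$. Combined with the upper bound, this is exactly \eqref{eq:uniform}.

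There is no real obstacle here: the lemma is the punchline assembling (i) the energy bound from monotonicity of $\mathcal{F}$ under $T$, (ii) compactness and Skoda's uniform integrability in $\mathcal{E}^1(\Omega)$, and (iii) the Kolodziej estimate. The only point that deserves a line of verification is that $\int e^{-\varphi_{j-1}}\, d\mu \ge 1$ so that normalizing the right-hand side does not damage the $L^2$-bound, which follows from $\varphi_{j-1}\le 0$ and $\mu(\Omega)=1$.
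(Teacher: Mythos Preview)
Your proposal is correct and follows essentially the same route as the paper: uniform energy bound $\Rightarrow$ precompactness in $\mathcal{E}^1(\Omega)$ $\Rightarrow$ Skoda's uniform integrability for the family (zero Lelong numbers) $\Rightarrow$ uniform $L^{2}$-bound on the densities $f_{j}$ $\Rightarrow$ Ko\l odziej's $L^{\infty}$-estimate. Your explicit remark that $\int_{\Omega} e^{-\f_{j-1}}\,d\mu \ge 1$ (so the normalization does not spoil the $L^{2}$-bound) and the maximum-principle justification of $\f_{j}\le 0$ are small clarifications the paper leaves implicit, but the argument is otherwise identical.
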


% ----------------------------------------------------------------

\subsection{$\mathcal{C}^{2}$-a priori estimates}
\label{subsec:apriori_estimates}

The goal of this section is to establish the following a priori estimates on the Laplacian
of the solutions to $(MA)_{j-1}$.

\begin{thm} \label{thm:C2}
There exists $C>0$ such that for all $j \in \NN$,
\begin{equation*}
  \sup_{\bar{\Omega}} \abs{\Delta \f_{j}} \leq C.
\end{equation*}
\end{thm}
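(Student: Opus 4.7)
My plan is to follow the two-step strategy of Caffarelli--Kohn--Nirenberg--Spruck \cite{CKNS85} for $C^{2}$ estimates on the Dirichlet problem for the complex Monge--Amp\`ere equation: estimate $\Delta \f_{j}$ separately at the boundary and in the interior. Since $\f_{j}$ is plurisubharmonic, $\Delta \f_{j}=\Delta_{0}\f_{j}\geq 0$, so only the upper bound is at stake.

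For the boundary estimate, I would exploit strong pseudoconvexity of $\partial\Omega$: the defining function $\rho$ can be chosen so that $dd^{c}\rho\geq c\omega_{0}$ near $\bar{\Omega}$. At a boundary point, since $\f_{j}|_{\partial\Omega}=0$, the purely tangential second derivatives of $\f_{j}$ are determined by $\nabla\f_{j}$ and the second fundamental form of $\partial\Omega$; the mixed tangential-normal components are estimated by upper and lower barriers of the form $\f_{j}\pm K\rho$; and the normal-normal component is read off directly from the equation $\det(\f_{j,i\bar{k}})=f_{j}$. All constants depend only on $\norm{f_{j}}_{L^{\infty}}$ and $\norm{\f_{j}}_{C^{1}(\bar{\Omega})}$. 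The former is uniformly bounded in $j$ by Lemma~\ref{lem:C0}, since $\log f_{j}=-\f_{j-1}-\log\int e^{-\f_{j-1}}d\mu$ has uniformly bounded sup norm; the latter is controlled by a Blocki-type gradient estimate for the complex Monge--Amp\`ere equation, which itself depends only on $L^{\infty}$ data, again uniformly in $j$.

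For the interior estimate I would apply the maximum principle on $\bar\Omega$ to an Aubin--Yau auxiliary function of the form
\[
H_{j}:=\log(\Delta_{0}\f_{j}) + A\f_{j} + B\f_{j-1},
\]
with $A,B>0$ to be chosen. On flat $\CC^{n}$ the bisectional curvature vanishes and Yau's pointwise inequality simplifies to $\Delta_{\omega_{j}}\log(\Delta_{0}\f_{j}) \geq \Delta_{0}(\log f_{j})/\Delta_{0}\f_{j} = -\Delta_{0}\f_{j-1}/\Delta_{0}\f_{j}$, while $\Delta_{\omega_{j}}\f_{j}=n$ and, by arithmetic-geometric mean applied to the Monge--Amp\`ere equations solved by $\f_{j-1}$ and $\f_{j}$,
\[
\Delta_{\omega_{j}}\f_{j-1}=\tr_{\omega_{j}}(dd^{c}\f_{j-1}) \geq n\left(\tfrac{f_{j-1}}{f_{j}}\right)^{1/n}\geq c_{0}>0
\]
uniformly in $j$, thanks again to Lemma~\ref{lem:C0}. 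Combining these with the maximum principle at an interior maximum of $H_{j}$, and comparing with the boundary values $H_{j}|_{\partial\Omega}=\log\Delta_{0}\f_{j}|_{\partial\Omega}$ already controlled by the previous step, should yield $\sup_{\bar\Omega}\Delta_{0}\f_{j}\leq C$.

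The main obstacle is precisely the circular coupling between $\f_{j}$ and $\f_{j-1}$ induced by the right-hand side $f_{j}=e^{-\f_{j-1}}/Z_{j-1}$: a direct application of Yau's argument would require an upper bound on $-\Delta_{0}\log f_{j}=\Delta_{0}\f_{j-1}$, which is exactly the quantity being controlled for the previous iterate. The role of the term $B\f_{j-1}$ in $H_{j}$ is to break this circularity---the AM--GM lower bound on the new positive contribution $B\tr_{\omega_{j}}(dd^{c}\f_{j-1})$ involves only $L^{\infty}$ norms of the $\f_{k}$'s, which are already uniformly bounded. Verifying that $A$ and $B$ can actually be chosen so that the resulting recurrence on $M_{j}:=\sup\Delta_{0}\f_{j}$ closes uniformly in $j$, rather than merely producing a bound $M_{j}\leq C(M_{j-1})$ that grows along the iteration, is the delicate numerical core of the argument and the step I would expect to require the most care.
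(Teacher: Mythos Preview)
Your overall architecture---boundary $C^{2}$ via CKNS barriers, interior via an Aubin--Yau auxiliary function---is exactly the paper's. The gap is in the interior step: the ``delicate numerical core'' you flag is not merely delicate, it is a real obstruction with your choice of tool, and the paper dissolves it with a one-line inequality you are missing.

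Concretely, from Yau's inequality on the flat background you correctly get
\[
\Delta_{\omega_{j}}\log\Delta_{0}\f_{j}\ \geq\ -\,\frac{\Delta_{0}\f_{j-1}}{\Delta_{0}\f_{j}}
= -\,\frac{\tr_{\beta}(\omega_{j-1})}{\tr_{\beta}(\omega_{j})}.
\]
Your plan is to offset the right-hand side by the AM--GM lower bound $\tr_{\omega_{j}}(\omega_{j-1})\geq n(f_{j-1}/f_{j})^{1/n}\geq c_{0}$. That only yields, at an interior maximum of $H_{j}$, the recursive bound $M_{j}\leq \kappa\,M_{j-1}$ with $\kappa=e^{(A+B)C_{0}}/(An+Bc_{0})$; one checks that for $C_{0}$ large there is \emph{no} choice of $A,B>0$ with $\kappa\leq 1$, so the iteration need not close. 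The paper instead uses the elementary trace inequality
\[
\frac{\tr_{\beta}(\omega_{j-1})}{\tr_{\beta}(\omega_{j})}\ \leq\ \tr_{\omega_{j}}(\omega_{j-1})
\qquad(\text{i.e. }\ \Delta_{0}\f_{j-1}/\Delta_{0}\f_{j}\leq \Delta_{j}\f_{j-1}),
\]
valid pointwise for positive $(1,1)$-forms. With $B=1$ this absorbs the bad term \emph{exactly}: $\Delta_{j}(\log\Delta_{0}\f_{j}+\f_{j-1})\geq 0$. Adding $\f_{j}$ (so $A=1$) makes $\Delta_{j}h_{j}\geq \Delta_{j}\f_{j}=n>0$ strictly, hence the maximum of $h_{j}=\log\Delta_{0}\f_{j}+\f_{j}+\f_{j-1}$ is forced to $\partial\Omega$, and one reads off $\sup_{\Omega}\Delta_{0}\f_{j}\leq e^{2C_{0}}\sup_{\partial\Omega}\Delta_{0}\f_{j}$ with \emph{no} reference to $M_{j-1}$ at all. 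The circularity you identify simply does not arise.

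A second, smaller point: your claim that a ``B{\l}ocki-type gradient estimate \dots depends only on $L^{\infty}$ data'' is suspect here, since the right-hand side $\log f_{j}=-\f_{j-1}+\mathrm{const}$ has $C^{1}$ norm equal to $\sup|\nabla\f_{j-1}|$, reintroducing circularity. The paper sidesteps this too: it proves only the \emph{boundary} gradient bound by barriers (Lemma~\ref{lem:C1}), obtains $\sup_{\Omega}\Delta\f_{j}\leq C(1+\sup_{\Omega}|\nabla\f_{j}|^{2})$ from the interior and boundary second-order estimates combined, and then closes with a blow-up/Liouville argument for the full gradient, using only the uniform $C^{0}$ bound.
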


These estimates are ``almost'' contained in \cite{CKNS85}, however hypothesis (1.3) on p. 213 is
not satisfied, hence neither \cite[Theorem 1.1]{CKNS85} nor \cite[Theorem 1.2]{CKNS85} can be applied
to our situation.

We nevertheless follow their proof as organized by S. Boucksom \cite{Bou11},
explaining some of the necessary adjustments. It will be a consequence of the following series of lemmas.

\begin{lem}\label{lem:C1}
There exists $C_{1}>0$ such that
\begin{equation*}
  \sup_{\partial \Omega} \abs{\nabla \f_{j}} \leq C_{1}.
\end{equation*}
\end{lem}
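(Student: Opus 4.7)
The plan is to construct global barriers and then use the fact that the boundary values vanish to convert the bound on normal derivatives into a gradient estimate. The key observation is that since $\f_{j}$ vanishes identically on $\partial\Omega$, the tangential gradient along the boundary is zero, so $|\nabla \f_{j}|$ at a point of $\partial\Omega$ equals the modulus of the inward normal derivative. Thus it is enough to sandwich $\f_{j}$ between two smooth functions with controlled normal derivatives that both vanish on $\partial\Omega$.

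The upper barrier is the zero function: by plurisubharmonicity and the maximum principle, $\f_{j}\le 0=\f_{j}|_{\partial\Omega}$ on $\bar\Omega$. For the lower barrier I would use a large multiple of the defining function $\rho$. As recalled at the end of subsection~\ref{subsec:clevi}, because $\Omega$ is strongly pseudoconvex we may (after replacing $\rho$ by $e^{c\rho}-1$ for $c\gg 1$) assume $dd^{c}\rho$ is positive definite in a neighborhood of $\bar\Omega$, so there is $c_{0}>0$ with $(dd^{c}\rho)^{n}\ge c_{0}\mu$ on $\bar\Omega$. Then $(dd^{c}(A\rho))^{n}=A^{n}(dd^{c}\rho)^{n}\ge A^{n}c_{0}\,\mu$. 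On the other hand, by the $L^\infty$ bound of Lemma~\ref{lem:C0} and $\mu(\Omega)=1$, the density
\begin{equation*}
  f_{j}=\frac{e^{-\f_{j-1}}}{\int_{\Omega}e^{-\f_{j-1}}\,d\mu}
\end{equation*}
is uniformly bounded above by some constant $M$ depending only on $C_{0}$. Choosing $A$ large so that $A^{n}c_{0}\ge M$, we get $(dd^{c}(A\rho))^{n}\ge (dd^{c}\f_{j})^{n}$ in $\Omega$ with $A\rho=\f_{j}=0$ on $\partial\Omega$, and the Bedford--Taylor comparison principle gives $A\rho\le\f_{j}\le 0$ throughout $\bar\Omega$.

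Now fix $x_{0}\in\partial\Omega$ and let $\nu$ denote the inward unit normal to $\partial\Omega$ at $x_{0}$. The three smooth functions $A\rho,\f_{j},0$ coincide at $x_{0}$ and satisfy $A\rho\le\f_{j}\le 0$ in a one-sided neighborhood, so differentiating in the direction $\nu$ yields
\begin{equation*}
  A\,\frac{\partial\rho}{\partial\nu}(x_{0})\ \le\ \frac{\partial\f_{j}}{\partial\nu}(x_{0})\ \le\ 0.
\end{equation*}
Both ends are independent of $j$ (and $|\partial\rho/\partial\nu|$ is bounded on the compact set $\partial\Omega$), which gives a uniform bound on the normal derivative. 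Combined with the vanishing of tangential derivatives noted above, this yields the desired estimate $\sup_{\partial\Omega}|\nabla\f_{j}|\le C_{1}$.

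The only step requiring care is the choice of barrier, i.e.\ making sure one can compare the Monge--Amp\`ere masses; this is where Lemma~\ref{lem:C0} enters crucially, since without the uniform $L^\infty$-bound on $\f_{j-1}$ the density $f_{j}$ could be arbitrarily large and no fixed $A$ would work. Beyond this, the argument is a standard application of the comparison principle plus the Hopf-type observation at the boundary.
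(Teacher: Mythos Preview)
Your proof is correct and follows essentially the same barrier-plus-comparison-principle strategy as the paper. The only difference is cosmetic: you take $A\rho$ (with $A$ large) as the lower barrier, whereas the paper invokes \cite{CKNS85} to produce the smooth plurisubharmonic function $u$ solving $(dd^{c}u)^{n}=e^{C_{0}}\mu$ with $u_{|\partial\Omega}=0$; either choice yields $u\le\f_{j}\le 0$ (resp.\ $A\rho\le\f_{j}\le 0$) via the comparison principle and hence the uniform bound on the normal derivative at the boundary.
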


\begin{proof}
It follows from the order zero uniform estimates (\ref{eq:uniform}) that
\begin{equation*}
  (dd^{c} \f_{j})^{n} \leq e^{C_{0}} \mu \text{ in } \Omega.
\end{equation*}
Let $u$ denote the unique smooth plurisubharmonic function in $\bar{\Omega}$ such that
\begin{equation*}
(dd^{c} u)^{n}=e^{C_{0}} \mu
\text{ in } \Omega
\text{ with }
u_{|\partial \Omega} \equiv 0.
\end{equation*}
The latter exists by \cite[Theorem 1.1]{CKNS85}. It follows from the comparison principle that
\begin{equation*}
  u \leq \f_{j} \leq 0 \text{ in } \Omega.
\end{equation*}
This yields the desired control of $\nabla \f_{j}$ on $\partial \Omega$.
\end{proof}

\begin{lem}
There exists $C_2>0$ such that
\begin{equation*}
\sup_{\Omega} \abs{\Delta \f_{j}} \leq C_2 (1+\sup_{\partial \Omega} \abs{\Delta \f_{j}} ).
\end{equation*}
\end{lem}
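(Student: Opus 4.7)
The plan is to derive this interior-versus-boundary Laplacian bound by the maximum principle of Yau--Aubin--Caffarelli--Kohn--Nirenberg--Spruck type, adapted to the Dirichlet setting as in Boucksom's reorganization \cite{Bou11} of the classical argument of \cite{CKNS85}. The natural target is the flat-reference trace $\Delta_{0} \f_{j} := \tr_{\omega_{0}}(\omega_{j})$, where $\omega_{0}$ denotes the Euclidean K\"{a}hler form: this quantity is nonnegative since $\f_{j}$ is plurisubharmonic, and equivalent to $\abs{\Delta \f_{j}}$ up to dimensional constants, so controlling its sup is exactly what we need.

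The first step is to introduce the auxiliary function
\begin{equation*}
  H = \log(\Delta_{0} \f_{j}) - A\, \f_{j}
\end{equation*}
for a sufficiently large constant $A>0$, and to ask whether $\sup_{\bar{\Omega}} H$ is attained on $\partial \Omega$ or in the interior. If the supremum is attained on $\partial \Omega$, then using $\f_{j}\vert_{\partial \Omega} = 0$ together with the uniform bound $-C_{0} \leq \f_{j} \leq 0$ of Lemma~\ref{lem:C0}, the pointwise inequality $H(z) \leq \sup_{\partial \Omega} H$ rewrites as
\begin{equation*}
  \Delta_{0} \f_{j}(z) \leq e^{A C_{0}} \sup_{\partial \Omega} \Delta_{0} \f_{j},
\end{equation*}
which is exactly the desired estimate with $C_{2} = e^{AC_{0}}$.

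If instead the supremum is attained at an interior point $p \in \Omega$, then $\Delta_{\omega_{j}} H(p) \leq 0$. The key ingredient is Yau's second-order inequality, which in the flat setting (vanishing bisectional curvature of $\omega_{0}$) reads
\begin{equation*}
  \Delta_{\omega_{j}} \log(\Delta_{0} \f_{j}) \geq \frac{\Delta_{0} \log \det(\f_{j})_{i\bar{k}}}{\Delta_{0} \f_{j}} = -\frac{\Delta_{0} \f_{j-1}}{\Delta_{0} \f_{j}},
\end{equation*}
where we used $\log \det(\f_{j})_{i\bar{k}} = \log f_{j} + \text{const}$ and the explicit form $\log f_{j} = -\f_{j-1} + \text{const}$ coming from $(MA)_{j-1}$. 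Combined with $\Delta_{\omega_{j}}(-A\f_{j}) = -An$, the maximum principle alone only produces the trivial inequality $0 \leq An + \Delta_{0} \f_{j-1}/\Delta_{0} \f_{j}$. The remedy is to enrich $H$ with a coupling term of the form $+B\,\f_{j-1}$ (or equivalently a $B\abs{z}^{2}$ perturbation), so as to gain a positive contribution $B\tr_{\omega_{j}}(\omega_{j-1})$: by AM--GM this is bounded below by $nB(f_{j-1}/f_{j})^{1/n}$, which is a uniform positive constant thanks to the $L^{\infty}$ control of $\f_{j-1}$ and $\f_{j-2}$ provided by Lemma~\ref{lem:C0}. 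Calibrating $A, B$ then forces a uniform bound on $\Delta_{0}\f_{j}(p)$.

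The main obstacle is precisely the coupling between consecutive iterates: the second derivatives of the right-hand side of $(MA)_{j-1}$ bring in $\Delta_{0} \f_{j-1}$, which is exactly the quantity we are trying to estimate at the previous level. This is also why the CKNS monotonicity hypothesis (1.3) on the dependence of the source on the unknown fails in our setting, so that neither of their theorems can be invoked directly. Decoupling the iterates requires careful balancing of the test function so that the positive coupling contribution dominates the bad-signed $\Delta_{0} \log f_{j} = -\Delta_{0} \f_{j-1}$, using only the $L^{\infty}$ bound of Lemma~\ref{lem:C0} and the plurisubharmonicity of $\f_{j-1}$; this is the crux of the adaptation credited to \cite{Bou11}.
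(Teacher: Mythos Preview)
Your test function is on the right track --- adding a multiple of $\f_{j-1}$ is exactly what is needed --- but the mechanism you invoke to close the argument does not work. Bounding $B\,\tr_{\omega_{j}}(\omega_{j-1})$ from below by the constant $nB(f_{j-1}/f_{j})^{1/n}$ via AM--GM cannot dominate the bad term $\Delta_{0}\f_{j-1}/\Delta_{0}\f_{j}$: at an interior maximum your inequality only yields
\[
\Delta_{0}\f_{j}(p)\;\leq\;\frac{\Delta_{0}\f_{j-1}(p)}{\,nBc_{0}-An\,},
\]
which still couples $\sup_{\Omega}\Delta_{0}\f_{j}$ to $\sup_{\Omega}\Delta_{0}\f_{j-1}$ and gives no bound uniform in $j$. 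You correctly identify this coupling as the obstacle, but your proposed ``calibration of $A,B$'' cannot remove it, because a constant lower bound on $\tr_{\omega_{j}}(\omega_{j-1})$ is simply the wrong kind of control.

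The missing inequality is the elementary trace estimate for three positive $(1,1)$-forms,
\[
\tr_{\beta}(\omega_{j-1})\;\leq\;\tr_{\beta}(\omega_{j})\cdot\tr_{\omega_{j}}(\omega_{j-1}),
\]
which in the present notation reads $\dfrac{\Delta_{0}\f_{j-1}}{\Delta_{0}\f_{j}}\leq\Delta_{\omega_{j}}\f_{j-1}$. This is exactly pointwise cancellation, not merely domination by a constant: with $B=1$ one gets
\[
\Delta_{\omega_{j}}\bigl(\log\Delta_{0}\f_{j}+\f_{j-1}\bigr)\;\geq\;-\frac{\Delta_{0}\f_{j-1}}{\Delta_{0}\f_{j}}+\Delta_{\omega_{j}}\f_{j-1}\;\geq\;0.
\]
The paper then considers $h_{j}=\log\Delta_{0}\f_{j}+\f_{j-1}+\f_{j}$ (note the sign: $+\f_{j}$, not $-A\f_{j}$; here $\omega_{j}=dd^{c}\f_{j}$ has no background form, so $\Delta_{\omega_{j}}\f_{j}=n$, not $n-\tr_{\omega_{j}}\omega_{0}$). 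This yields $\Delta_{\omega_{j}}h_{j}\geq n>0$, which forces the maximum of $h_{j}$ to the boundary and, via Lemma~\ref{lem:C0}, gives the stated estimate with no residual dependence on $\Delta_{0}\f_{j-1}$.
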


\begin{proof}
We let $\Delta_{j}$ denote the Laplace operator with respect to the K\"{a}hler form $\omega_{j}=dd^{c} \f_{j}$, while
$\Delta$ denotes the euclidean Laplace operator. We claim that for all $j \geq 1$,
\begin{equation} \label{eq:Yau}
\Delta_{j} \left\{ \log \Delta \f_{j} +\f_{j-1} \right\} \geq 0.
\end{equation}

Assuming this for the moment we show how to derive the desired control on $\Delta \f_{j}$.
Let $z_{j} \in \bar{\Omega}$ be a point which realizes the maximum of the function
\begin{equation*}
  h_{j}:=\f_{j}+\f_{j-1}+\log \Delta \f_{j}.
\end{equation*}
It follows from (\ref{eq:Yau}) that $z_{j} \in \partial \Omega$, otherwise $\Delta_{j} h_{j}(z_{j}) \leq 0$
contradicting
\begin{equation*}
  \Delta_{j} h_{j} \geq \Delta_{j} \f_{j} >0.
\end{equation*}
We infer from Lemma~\ref{lem:C0} that for all $w \in \Omega$,
\begin{equation*}
  \log \Delta \f_{j} (w) \leq 2 C_{0} + h_{j}(z_{j}) \leq 2C_{0}+\log \sup_{\partial \Omega} \Delta \f_{j},
\end{equation*}
which yields the desired upper bound.

It remains to establish $(\ref{eq:Yau})$. We shall need the following local differential inequality which goes back to the works of Aubin and Yau: if $\omega$ is an arbitrary K\"{a}hler form and $\beta=dd^{c} \norm{z}^{2}$
denotes the euclidean K\"{a}hler form, then
\begin{equation} \label{eq:Aubin}
 \Delta_{\omega} \log \tr_{\beta}(\omega) \geq -\frac{\tr_{\beta} (Ric \omega)}{\tr_{\beta}(\omega)}.
\end{equation}

We apply this inequality to $\omega=\omega_{j}=dd^{c} \f_{j}$. Observe that $Ric(\omega_{j})=\omega_{j-1}$
since
\begin{equation*}
  (dd^{c} \f_{j})^{n}=e^{-\f_{j-1}} e^{c_{j}} dV.
\end{equation*}
Observe that
\begin{equation*}
  \frac{\tr_{\beta}(\omega_{j-1})}{\tr_{\beta}(\omega_{j})} =\frac{\Delta_{\beta}(\f_{j-1})}{\tr_{\beta}(\omega_{j})}
\leq \Delta_{j} (\f_{j-1}).
\end{equation*}
Combined with $(\ref{eq:Aubin})$, this yields
\begin{equation*}
  \Delta_{j} \log \tr_{\beta}(\omega_{j})
\geq -\Delta_{j} (\f_{j-1}),
\end{equation*}
whence $(\ref{eq:Yau})$.
\end{proof}

\begin{lem}
There exists $C_3>0$ such that
\begin{equation*}
  \sup_{\partial \Omega} \abs{D^{2} \f_{j}} \leq C_3 (1+\sup_{\Omega} \abs{\nabla \f_{j}}^{2} ).
\end{equation*}
\end{lem}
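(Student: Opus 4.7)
My plan is to follow the classical boundary $C^2$ argument of Caffarelli--Kohn--Nirenberg--Spruck \cite{CKNS85}, as reorganized by Boucksom \cite{Bou11}, with the adjustment already flagged by the authors that the missing CKNS convexity hypothesis $(1.3)$ is replaced by strong pseudoconvexity of $\partial\Omega$ (which, as recalled in subsection~\ref{subsec:clevi}, allows us to choose the defining function $\rho$ strictly plurisubharmonic in a neighborhood of $\bar\Omega$). Fix a boundary point $p\in\partial\Omega$ and work in real coordinates $(x_1,\ldots,x_{2n-1},y)$ adapted to $\partial\Omega = \set{y=0}$ locally. I will bound the three types of real second derivatives of $\f_j$ at $p$ in turn: pure tangential, mixed tangential-normal, and pure normal-normal. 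The pure tangential part is immediate: the identity $\f_j(x,0) \equiv 0$ forces $\partial_{x_i x_k}\f_j(p)=0$ in these adapted coordinates, and transforming back to the ambient Euclidean frame produces only terms controlled by $\abs{\nabla\f_j(p)}$ (bounded through Lemma~\ref{lem:C1}) and by the $\mathcal{C}^{2}$ norm of $\rho$.

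For the tangential-normal part, which is the heart of the argument, pick a smooth real vector field $T$ defined near $p$ that is everywhere tangent to $\partial\Omega$, so that $T\f_j$ vanishes on $\partial\Omega$. Differentiating the log of the Monge-Amp\`ere equation $(dd^{c}\f_j)^{n} = f_j\mu$ in the direction $T$ and commuting $T$ past $\partial\dbar$ yields
\begin{equation*}
  \Delta_j(T\f_j) = -T\f_{j-1} + [T,\Delta_j]\f_j + O(1),
\end{equation*}
where the commutator involves a single derivative of $\f_j$ weighted by first derivatives of the coefficients of $T$, hence is bounded by $C(1+\abs{\nabla\f_j}_\infty)$, and $T\f_{j-1}$ is uniformly controlled via Lemma~\ref{lem:C0} together with the regularity of $\f_{j-1}$ at the previous step of the iteration. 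Construct next a barrier $w = -A\rho + B\abs{z-p}^{2}$ on $\bar\Omega\cap B_\delta(p)$: strong pseudoconvexity gives $dd^{c}\rho > 0$ in a neighborhood of $\partial\Omega$, while the arithmetic-geometric inequality
\begin{equation*}
  \tr_{\omega_j}\beta_0 \geq n\bigl(\beta_0^{n}/\omega_j^{n}\bigr)^{1/n} \geq c_0 > 0
\end{equation*}
(with $\beta_0 = dd^{c}\abs{z}^{2}$, using the uniform upper bound on $f_j$ from Lemma~\ref{lem:C0}) forces $\Delta_j w \leq -K$ for $K$ large, once $A,B$ are chosen appropriately. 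Comparing $\pm T\f_j$ against $C_{*}(1+\abs{\nabla\f_j}_\infty^{2})\, w$ on $\bar\Omega\cap B_\delta(p)$ and differentiating in $y$ at $p$ gives $\abs{\partial_y T\f_j(p)}\leq C(1+\abs{\nabla\f_j}_\infty^{2})$; the quadratic factor comes from combining the linear-in-gradient forcing term of the linearized equation with the linear-in-gradient scaling needed to dominate $T\f_j$ on the spherical portion $\Omega\cap \partial B_\delta(p)$.

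Finally, the pure normal-normal derivative $\partial_y^{2}\f_j(p)$ is recovered from the Monge-Amp\`ere equation evaluated at $p$: choosing coordinates in which the complex normal direction is isolated, the tangential $(n-1)\times(n-1)$ block of the complex Hessian $(\f_{j,k\bar\ell}(p))$ is bounded above by the two preceding steps and bounded below by a positive constant (from strict plurisubharmonicity, using the expansion $\f_j = (\partial_\nu \f_j)\rho + O(\rho^{2})$ with $\partial_\nu \f_j > 0$ on $\partial\Omega$ and strong pseudoconvexity of $\rho$); the identity $\det(\f_{j,k\bar\ell}(p)) = f_j(p)$ then reads as a linear equation for the remaining normal-normal complex entry whose leading coefficient is bounded below, producing a bound of the required quadratic order once converted back to real second derivatives. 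The main obstacle is precisely the barrier construction in the tangential-normal step: constants must be tracked carefully to obtain exactly the quadratic (and not a higher) dependence on $\abs{\nabla\f_j}_\infty$, and the replacement of CKNS's convexity assumption $(1.3)$ by strong pseudoconvexity has to be carried out consistently through the strictly plurisubharmonic defining function $\rho$.
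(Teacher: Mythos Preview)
Your three-step plan (tangent--tangent, tangent--normal via a barrier, normal--normal via the determinant) is exactly the CKNS/Boucksom scheme the paper follows, and your choice to exploit strict plurisubharmonicity of $\rho$ in place of convexity is the right adjustment. However, two points in your tangent--normal step are genuinely off and would break the argument as written.

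First, the error terms are not $O(1+\abs{\nabla\f_{j}}_{\infty})$. After differentiating the equation and commuting $T=\sum a^{i}\partial_{i}$ past $\partial\dbar$, the remainder $g^{k\bar l}\bigl(a^{i}_{,k\bar l}\f_{j,i}+a^{i}_{,k}\f_{j,i\bar l}+a^{i}_{,\bar l}\f_{j,ik}\bigr)$ contains the term $g^{k\bar l}a^{i}_{,k\bar l}\f_{j,i}$, which is bounded only by $C\,\abs{\nabla\f_{j}}\cdot\tr_{\omega_{j}}(\beta_{0})$; the factor $\tr_{\omega_{j}}(\beta_{0})$ is \emph{not} a priori bounded at this stage. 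Consequently the barrier must satisfy $\Delta_{j}w\le -c\,\tr_{\omega_{j}}(\beta_{0})$, not merely $\Delta_{j}w\le -K$ for a constant. Your AM--GM argument only produces the latter. The good news is that your barrier $w=-A\rho+B\abs{z-p}^{2}$ \emph{does} give the correct inequality once you use $dd^{c}\rho\ge c\beta_{0}$ directly: $\Delta_{j}w\le (B-Ac)\tr_{\omega_{j}}(\beta_{0})$, negative for $A$ large. The paper's barrier $b=\psi+\eps h-\mu\rho^{2}$ (with $\Delta_{\eta}h=-n$) is Boucksom's variant designed for the same purpose; either works, but the inequality you actually need is the one involving $\tr_{\omega_{j}}(\eta)$, and this is precisely what the paper records as $\Delta_{\psi}b\le -\tfrac12\tr_{\psi}(\eta)$.

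Second, ``$T\f_{j-1}$ is uniformly controlled via Lemma~\ref{lem:C0}'' is not correct: Lemma~\ref{lem:C0} is a $\mathcal{C}^{0}$ bound, and invoking ``regularity of $\f_{j-1}$ at the previous step'' gives smoothness of each $\f_{j-1}$ individually but no uniform $\mathcal{C}^{1}$ control. The paper sidesteps this by emphasizing that the barrier construction needs only the uniform upper bound $(dd^{c}\f_{j})^{n}\le C\eta^{n}$ coming from the $\mathcal{C}^{0}$ estimate; the residual dependence on interior first derivatives is then exactly what is recorded in the $\sup_{\Omega}\abs{\nabla\f_{j}}^{2}$ on the right-hand side of the lemma and absorbed in the subsequent blow-up argument. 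You should route $T\log f_{j}$ through that mechanism rather than claim it is already bounded.
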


\begin{proof}
This follows from a long series of estimates which are the same as those of \cite{CKNS85}, up
to minor modifications. We only sketch these out, following the proof of \cite[Lemma 7.17]{Bou11}.
To fit in with the notations of \cite{Bou11}, we set $\p=\f_{j}-\rho$ and $\eta=dd^{c} \rho$
so that $\p$
is a $\eta$-psh function (still) with zero boundary values on $\partial \Omega$ such that
\begin{equation*}
  (\eta+dd^{c} \p)^{n}=e^{-\p} e^{F} \eta^{n}
\end{equation*}
where $F$ is some smooth density. Our problem is thus equivalent to showing an a priori estimate
\begin{equation*}
  \sup_{\partial \Omega} \abs{D^{2} \p} \leq C_3 (1+\sup_{\Omega} \abs{\nabla \p}^{2} ),
\end{equation*}
where $C_3$ is under control.

Fix $p\in \Omega$. It is classical that one can choose complex coordinates $(z_{j})_{1 \leq j \leq n}$
so that $p=0$ and
\begin{equation*}
  \rho=-x_{n}+\Re \left( \sum_{j,k=1}^{n} a_{jk} z_{j}\zbar_{k} \right) +O(\abs{z}^3)
\end{equation*}
where $z_{j}=x_{j}+iy_{j}$. We set for convenience
\begin{equation*}
  t_{1}=x_{1}, \quad t_2=x_{1}, \quad \ldots, \quad t_{2n-1}=y_{n},\quad t_{2n}=x_{2n}.
\end{equation*}
Let $(D_{j})$ be the dual basis of $dt_{1},\ldots,dt_{2n-1},-d\rho$ so that for $j<2n$,
\begin{equation*}
   D_{j} = \frac{\partial}{\partial t_{j}} - \frac{\rho_{t_{j}}}{\rho_{x_{n}}} \frac{\partial}{\partial x_{n}} \; \text{ and } \; D_{2n} = -\frac{1}{\rho_{x_{n}}} \frac{\partial}{\partial x_{n}}
\end{equation*}

\smallskip

\noindent {\it Step 0: bounding the tangent-tangent derivatives.}
Observe that the $D_{j}$'s commute and are tangent to $\partial \Omega$ for $j<2n$, we thus
have a trivial control on the tangent-tangent derivatives at $p=0$,
\begin{equation*}
 D_{i}D_{j}\p(0)=0, \; \text{ for } 1 \leq i,j <2n.
\end{equation*}

\smallskip

\noindent {\it Step 1: bounding the normal-tangent derivatives.}
Set $K=\sup_{\partial \Omega} \abs{\nabla \p}$. We claim that for all $1 \leq i<2n$
\begin{equation*}
 \abs{D_{i}D_{2n} \p(0)} \leq C (1+K),
\end{equation*}
for some uniform constant $C>0$.

Let $h$ be the smooth function in $\Omega$ with zero boundary values such that
\begin{equation*}
 \Delta_{\eta}h:=n \frac{dd^{c} h \wedge \eta^{n-1}}{\eta^{n}}=-n
\text{ in } \Omega.
\end{equation*}
The proof requires the construction of a barrier $b=\p+\eps h-\mu \rho^{2}$ such that
\begin{equation*}
 0 \leq b \; \text{ and }
\Delta_{\p}b :=n \frac{dd^{c} b \wedge (\eta+dd^{c} \p)^{n-1}}{(\eta+dd^{c} \p)^{n}}
\leq -\frac{1}{2} \tr_{\p}(\eta) \; \text{ in } B,
\end{equation*}
where $B$ is a half ball centered at $p=0$ of positive radius and $\eps,\mu>0$ are under control.
This can be done exactly as in \cite[Lemma 7.17, Step 1]{Bou11}, as the only information needed is that
$(\eta+dd^{c} \p)^{n}$ is uniformly bounded from above by $C \eta^{n}$, which follows here from our
${\mathcal C}^0$-estimate.

One then shows the existence of uniform constants $\mu_{1},\mu_2>0$ such that the functions
$v_{\pm}:=K(\mu_{1}+\mu_2\abs{z}^{2}) \pm D_{j} \p$ both satisfy
\begin{equation*}
0 \leq v_{\pm} \; \text{ on } B
\; \text{ and } \;
\Delta_{\p} v_{\pm} \leq 0
\; \text{ in } B.
\end{equation*}

It follows then from the maximum principle that $v_{\pm} \geq 0$ in $B$ so that
$D_{2n} v_{\pm}(0) \geq 0$ since $v_{\pm}(0)=0$. Thus
\begin{equation*}
 \abs{D_{2nj} \p(0)} \leq C K (1+D_{2n} b(0) ) \leq C' (1+K),
\end{equation*}
as claimed.

\smallskip

\noindent {\it Step 2: bounding the normal-normal derivatives.}
This is somehow the most delicate estimate.
Set again $K=\sup_{\partial \Omega} \abs{\nabla \p}$.
We want to show that
$\abs{D^{2}_{2n} \p(0)} \leq C (1+K^{2})$ for some uniform constant $C>0$.
Using previous estimates on $D_{i}D_{j}\p(0)$, it suffices to show that
\begin{equation*}
    \abs{\p_{z_{n}\zbar_{n}}(0)} \leq C (1+K^{2}).
\end{equation*}

Recall that
\begin{equation*}
 \det \left( \rho_{z_{i} \zbar_{j}}(0)+ \p_{z_ i\zbar_{j}} (0) \right)_{1 \leq i,j \leq n}=e^{-\p(0)+F(0)}
\end{equation*}
is bounded from above, and for $i<n$,
\begin{equation*}
  \abs{\p_{z_{i}\zbar_{n}}(0)} \leq C (1+K).
\end{equation*}
Expanding the determinant with respect to the last row
thus yields the expected upper bound, provided we can bound from below
the $(n-1,n-1)$-minor
\begin{equation*}
  \det \left( \rho_{z_{i} \zbar_{j}}(0)+ \p_{z_ i\zbar_{j}} (0) \right)_{1 \leq i,j \leq n-1}.
\end{equation*}
A (by now) classical barrier argument
shows that $dd^{c} \f=\eta+dd^{c} \p$ is uniformly bounded from below by $\eps \eta$ on the complex
tangent space to $\partial \Omega$
(see \cite[Lemma 7.16]{Bou11} which can be used since $\f_{j}$ is uniformly bounded).
\end{proof}

\begin{lem}
There exists $C_4>0$ such that
\begin{equation*}
\sup_{\Omega} \abs{\nabla \f_{j}} \leq C_4.
\end{equation*}
\end{lem}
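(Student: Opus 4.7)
The plan is to establish the interior gradient bound via a Bernstein-type maximum principle for the K\"ahler metric $\omega_{j}=dd^{c}\f_{j}$, using a test function of the form
\begin{equation*}
H_{j}:=\log|\partial \f_{j}|^{2}_{\beta}+\chi(\f_{j}),
\end{equation*}
where $\beta=dd^{c}\norm{z}^{2}$ is the Euclidean K\"ahler form and $\chi$ is a one-variable convex function to be tuned (for instance $\chi(t)=-At$ with $A$ large). Since the boundary bound is already provided by Lemma~\ref{lem:C1}, the task reduces to controlling $|\partial\f_{j}|_{\beta}$ at an interior maximum of $H_{j}$.

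The computation of $\Delta_{\omega_{j}}H_{j}$ rests on the explicit identity
\begin{equation*}
\Delta_{\omega_{j}}|\partial \f_{j}|^{2}_{\beta}=|\partial\partial \f_{j}|^{2}_{\omega_{j},\beta}+\tr_{\beta}(\omega_{j})-2\,\mathrm{Re}\,\langle\partial \f_{j},\bar\partial \f_{j-1}\rangle_{\beta},
\end{equation*}
obtained by differentiating the Monge-Amp\`ere equation $(dd^{c}\f_{j})^{n}=e^{-\f_{j-1}+c_{j}}\mu$ and using $g_{j}^{l\bar m}\f_{j,kl\bar m}=\partial_{k}\log f_{j}=-\partial_{k}\f_{j-1}$. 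The first two terms are nonnegative and may be discarded; only the last term, which encodes the Ricci identity $\Ric(\omega_{j})=\pi \omega_{j-1}$, can cause trouble. By Cauchy-Schwarz, one estimates $|\mathrm{Re}\,\langle\partial \f_{j},\bar\partial \f_{j-1}\rangle_{\beta}|\leq|\partial \f_{j}|_{\beta}\cdot M_{j-1}$ with $M_{j}:=\sup_{\Omega}|\partial\f_{j}|_\beta$.

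At an interior maximum $p$ of $H_{j}$, the critical point condition $\nabla H_{j}(p)=0$ eliminates the cross-gradient terms in favor of $\chi'(\f_{j})\nabla \f_{j}$, and $\Delta_{\omega_{j}}H_{j}(p)\leq 0$, combined with the above Cauchy-Schwarz estimate and the uniform $C^{0}$-bound of Lemma~\ref{lem:C0}, yields an inductive inequality of the form
\begin{equation*}
M_{j}^{2}\leq a+b\,M_{j-1},
\end{equation*}
for $a,b>0$ uniform in $j$, depending only on $\Omega$ and on the uniform bounds already established.

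The main obstacle is precisely this apparent dependence of $M_{j}$ on $M_{j-1}$, which threatens the uniformity in $j$. The key observation is that the recurrence is \emph{quadratic-linear}: iterating $M_{j}\leq\sqrt{a+bM_{j-1}}$ starting from $M_{0}<\infty$ (guaranteed by smoothness of the initial datum $\f_{0}=\rho$), the sequence is bounded by the positive fixed point of $x^{2}=a+bx$, giving the uniform bound $M_{j}\leq C_{4}$. The delicate point is thus the precise tuning of the weight $\chi$ (in particular the constant $A$) so as to obtain the quadratic-linear rather than linear-linear form of the recurrence; this is achieved by choosing $A$ large enough so that the $\chi'(\f_{j})^{2}$ contribution coming from the critical point condition dominates, absorbing one power of $|\partial\f_{j}|_{\beta}$ on the left-hand side of the resulting inequality.
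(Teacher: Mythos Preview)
Your approach is genuinely different from the paper's. The paper does not carry out a Bernstein-type maximum principle at all; instead it runs a blow-up argument. Combining the two preceding lemmas gives
\[
\sup_{\Omega}|\Delta\f_{j}|\leq C\bigl(1+\sup_{\Omega}|\nabla\f_{j}|^{2}\bigr).
\]
Assuming $M_{j}:=\sup_{\Omega}|\nabla\f_{j}|\to\infty$ along a subsequence with maxima at $x_{j}\to a$, one rescales $\p_{j}(z):=\f_{j}(x_{j}+M_{j}^{-1}z)$. These functions are uniformly bounded, satisfy $|\nabla\p_{j}(0)|=1$, and $\sup\Delta\p_{j}\leq C$ (thanks to the displayed inequality). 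A $\mathcal{C}^{1}$ limit $\p$ exists; if $a\in\partial\Omega$ the barrier from Lemma~\ref{lem:C1} forces $\p\equiv 0$, while if $a\in\Omega$ the limit extends to a bounded plurisubharmonic function on all of $\CC^{n}$, hence is constant by Liouville. Either way $|\nabla\p(0)|=1$ is contradicted. Note that this argument involves no recursion on $j$ whatsoever.

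Your route has a real gap at the step where you claim the recursion $M_{j}^{2}\leq a+bM_{j-1}$. With $\chi(t)=-At$ the critical-point substitution gives, as you say, a contribution $(\chi')^{2}|\partial\f_{j}|_{\omega_{j}}^{2}=A^{2}|\partial\f_{j}|_{\omega_{j}}^{2}$, but this lands on the \emph{right-hand side} of the inequality $\Delta_{\omega_{j}}H_{j}(p)\leq 0$ with the wrong sign: it does not ``absorb a power of $|\partial\f_{j}|_{\beta}$'', it introduces the uncontrolled quantity $|\partial\f_{j}|_{\omega_{j}}^{2}$ (gradient in the new metric). After discarding the nonnegative terms in your identity, what remains at the maximum is a bound on $\tr_{\beta}(\omega_{j})(p)$ in terms of $|\partial\f_{j}(p)|\,M_{j-1}$ and constants, and there is no relation that converts a bound on $\tr_{\beta}(\omega_{j})(p)$ into a bound on $|\partial\f_{j}(p)|_{\beta}^{2}$. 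Standard Bernstein-type gradient estimates for complex Monge--Amp\`ere (e.g.\ B{\l}ocki's) close by a more delicate use of the $|\partial\partial\f_{j}|^{2}_{\omega_{j},\beta}$ term together with the critical-point condition, but the output is a bound of the shape $M_{j}\leq C(1+M_{j-1})$, i.e.\ linear--linear, which allows exponential growth in $j$ and does not have the contracting fixed point you invoke. To obtain a genuinely uniform bound by your method you would need either a quadratic--linear recursion (which your sketch does not produce) or an independent control of $|\partial\f_{j}|_{\omega_{j}}^{2}$, neither of which is available at this stage. The paper's rescaling argument sidesteps this entirely by trading the coupled recursion for a single compactness/Liouville step.
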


\begin{proof}
It follows from previous estimates that
\begin{equation*}
\sup_{\Omega} \Delta \f_{j} \leq C \left( 1+ \sup_{\Omega} \abs{\nabla \f_{j}}^{2} \right).
\end{equation*}

Assume that $\sup_{\Omega} \abs{\nabla \f_{j}}$ is unbounded. Up to extracting and relabelling, this means that
\begin{equation*}
  M_{j}: = \abs{\nabla \f_{j} (x_{j})} = \sup_{\Omega} \abs{\nabla \f_{j}} \rightarrow + \infty
\end{equation*}
where $x_{j} \in \bar{\Omega}$ converges to $a \in \bar{\Omega}$. We set
\begin{equation*}
  \p_{j}(z) := \f_{j}(x_{j}+M_{j}^{-1}z).
\end{equation*}

This is a sequence of uniformly bounded plurisubharmonic functions which are well defined (at least) in a half ball $B$ around zero and satisfy
\begin{equation*}
  \abs{\nabla \p_{j}(0)}=1 \; \; \text{ and } \sup_B \Delta \p_{j} \leq C.
\end{equation*}
We infer that the sequence $(\p_{j})$ is relatively compact in $\mathcal{C}^1$, hence we can assume
that (up to relabeling) $\p_{j} \rightarrow \p \in {\mathcal C}^1(B)$ where $\p$ is plurisubharmonic and satisfies $\nabla \p(0)=1$.

If $a \in \partial \Omega$, it follows from the proof of Lemma~\ref{lem:C1} that $\p \equiv 0$,
contradicting $\nabla \p(0)=1$. Therefore $a \in \Omega$, so we can actually assume that $B$ is a ball of
arbitrary size, hence $\p$ can be extended as a plurisubharmonic function on the whole of $\CC^{n}$.
Since $\f_{j}$ is uniformly bounded, so are $\p_{j}$ and $\p$. Thus $\p$ has to be constant, contradicting
$\nabla \p(0)=1$.
\end{proof}

% ----------------------------------------------------------------

\subsection{Evans-Krylov theory}
\label{subsec:evans_krylov}

It follows from Schauder's theory for linear elliptic equations with variable coefficients
that it suffices to obtain a priori estimates
\begin{equation} \label{eq:2}
  \norm{\f_{j}}_{2,\alpha} \leq C
\end{equation}
for some positive exponent $\alpha>0$, in order to obtain a priori estimates
\begin{equation} \label{eq:k+2}
  \norm{\f_{j}}_{k+2,\alpha} \leq C_k
\end{equation}
at all orders $k \in \NN$. Here
\begin{equation*}
   \norm{h}_{k,\alpha}:=\sum_{j=0}^k \sup_{\Omega} \abs{D^j h}+\sup_{z,w \in \Omega, z \neq w} \frac{\abs{D^kh(z)-D^kh(w)}}{\abs{z-w}^{\alpha}}
\end{equation*}
denotes the norm associated to the H\"older space of functions $h$ which are $k$-times differentiable on $\bar{\Omega}$
with $k^{th}$-derivative H\"older-continuous of exponent $\alpha>0$.

Moreover the Evans-Krylov theory (as simplified by Trudinger) can be adapted to the case
of complex Monge-Amp\`{e}re equations,
showing that the a priori estimates (\ref{eq:2}) follow directly from Theorem~\ref{thm:C2}.
We refer the reader to \cite{Blo05} for a detailed presentation of this material.

% ----------------------------------------------------------------

\subsection{Conclusion}
\label{subsec:conclusion}

It follows from the previous sections that the sequence $(\f_{j})$ is relatively compact in
$\mathcal{C}^{\infty}(\bar{\Omega})$.
We let $\mathcal{K}$ denote the set of its cluster values. We infer from Proposition~\ref{prop:monotonediscrete}
that the functional $\mathcal{F}$ is constant on $\mathcal{K}$: for
all $\p \in \mathcal{K}$,
\begin{equation*}
  \mathcal{F}(\p)=\lim_{j \rightarrow +\infty} \nearrow \mathcal{F}(T^j \f_{0}).
\end{equation*}

Now $\mathcal{K}$ is clearly $T$-invariant, hence
$\mathcal{F}(T\p)=\mathcal{F}(\p)$ for all $\p \in \mathcal{K}$.
Thus Proposition~\ref{prop:monotonediscrete} again insures that
$T\p=\p$, i.e. $\p$ is a solution of $(MA)$.

As explained earlier, this is equivalent to saying that there exists
a K\"{a}hler-Einstein metric $\omega=dd^{c} \f$ with $Ric(\omega)=\pi\omega$ and
prescribed values on the boundary of $\Omega$, hence we have solved our geometrical problem.

% ----------------------------------------------------------------
% ----------------------------------------------------------------

\section{Uniqueness}
\label{sec:variational_characterization}

Recall that $(MA)$ is the Euler-Lagrange equation of the functional
\begin{equation*}
\mathcal{F}(\f):=\mathcal{E}(\f)+\log \left[ \int_{\Omega} e^{-\f} d \mu \right].
\end{equation*}

If a smooth strictly plurisubharmonic function $\f$
with zero boundary values maximizes ${\mathcal F}$, then it is a critical point of ${\mathcal F}$
hence $\f$ is a solution of $(MA)$. Indeed for any smooth function $v$
\begin{equation*}
  \frac{d}{dt}{\mathcal F}(\f+tv)_{|t=0}=\int_{\Omega} v (dd^{c} \f)^{n} - \frac{\int_{\Omega} v e^{-\f} d\mu}{\int_{\Omega} e^{-\f} d\mu}=0,
\end{equation*}
thus $(dd^{c} \f)^{n} =e^{-\f} \mu /\left( \int_{\Omega} e^{-\f} d \mu \right)$.

Our purpose here is to show that the converse holds true when
$\Omega$ satisfies an additional symmetry property.

% ----------------------------------------------------------------

\subsection{Continuous geodesics}
\label{subsec:continuous_geodesics}

In the setting of compact K\"{a}hler manifolds, Mabuchi \cite{Mab87}, Semmes \cite{Sem92} and Donaldson \cite{Don99}
have shown that the set of all K\"{a}hler metrics in a fixed cohomology class has the structure of an infinite Riemannian
manifold with non negative curvature. The notion of geodesic joining two K\"{a}hler metrics plays an important role there
and we refer the reader to \cite{Che00} for more information on this.

Our purpose here is to consider similar objects for pseudoconvex domains in order to study the uniqueness
of solutions to $(MA)$. Let $A$ denote the annulus $A=\{ \zeta \in \CC \, / \, 1 < \abs{\zeta} < e \}$ and fix two functions
$\phi_{0},\phi_{1}$ which are plurisubharmonic in $\Omega$, continuous up to the boundary, with zero boundary values.
We let $\mathcal{G}$ denote the set of all plurisubharmonic functions $\Psi$ on $\Omega \times A$
which are continuous on $\bar{\Omega} \times \bar{A}$ and such that
\begin{equation*}
  \Psi_{|\partial \Omega \times A} \equiv 0 \quad \text{and} \quad \Psi_{| \Omega \times \partial A} \leq \phi,
\end{equation*}
where $\phi(z,\zeta)=\phi_{0}(z)$ for $\abs{\zeta}=1$ and $\phi(z,\zeta)=\phi_{1}(z)$ for $\abs{\zeta}=e$. We set
\begin{equation*}
  \Phi(z,\zeta):=\sup \left\{ \Psi(z,\zeta) \, / \, \Psi \in \mathcal{G} \right\}.
\end{equation*}

\begin{prop}\label{prop:geodesic}
The function $\Phi$ is plurisubharmonic in $\Omega \times A$, continuous on $\bar{\Omega} \times \bar{A}$ and satisfies
\begin{itemize}
 \item [(i)] $\Phi(z,e^{i\theta} \zeta)=\Phi(z,\zeta)$ for all $(z,\zeta,\theta) \in \Omega \times A \times \RR$;
 \item [(ii)] $\Phi(z,1)=\phi_{0}(z)$ and $\Phi(z,e)=\phi_{1}(z)$ for all $z \in \Omega$;
 \item [(iii)] $(dd_{z,\zeta}^{c} \Phi)^{n+1} \equiv 0$ in $\Omega \times A$.
\end{itemize}
\end{prop}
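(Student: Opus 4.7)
The plan is to apply the Perron--Bremermann method for the homogeneous complex Monge--Amp\`ere equation on the pseudoconvex product domain $\Omega\times A$, with boundary data $\phi$ on $\bar{\Omega}\times\partial A$ and $0$ on $\partial\Omega\times\bar{A}$, supplemented by a symmetry argument for (i) and a balayage argument for (iii).

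I would first check that $\mathcal{G}$ is non-empty and uniformly bounded above. The candidate $\Psi_{0}(z,\zeta):=\phi_{0}(z)+\phi_{1}(z)$ is plurisubharmonic (sum of $z$-psh functions, constant in $\zeta$), continuous on $\bar{\Omega}\times\bar{A}$, vanishes on $\partial\Omega$, and because $\phi_{0},\phi_{1}\leq 0$ on $\bar{\Omega}$ by the maximum principle, satisfies $\Psi_{0}\leq\phi_{0}$ (resp.\ $\phi_{1}$) on the boundary component $|\zeta|=1$ (resp.\ $|\zeta|=e$). Hence $\Psi_{0}\in\mathcal{G}$. Conversely each $\Psi\in\mathcal{G}$ has boundary values $\leq 0$, so $\Psi\leq 0$ in $\Omega\times A$ by the maximum principle. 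Choquet's lemma then ensures that the usc regularization $\Phi^{*}$ is plurisubharmonic on $\Omega\times A$. The rotation $(z,\zeta)\mapsto(z,e^{i\theta}\zeta)$ preserves pshness, continuity, the lateral vanishing, and the boundary values on $\partial A$ (which depend only on $|\zeta|$ and $z$), so $\mathcal{G}$ is $S^{1}$-invariant; therefore so is $\Phi$, which settles (i).

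For (iii) I would run the classical balayage argument. Pick a ball $B\Subset\Omega\times A$ and let $u$ be the Bedford--Taylor solution of the Dirichlet problem $(dd^{c}u)^{n+1}=0$ in $B$ with $u=\Phi$ on $\partial B$; $u$ is psh and continuous on $\bar{B}$ and $u\geq\Phi$ on $B$ by maximality (since $\Phi|_{B}$ is itself a competitor). The gluing $\tilde{\Phi}:=u$ on $B$, $\tilde{\Phi}:=\Phi$ outside, is plurisubharmonic on $\Omega\times A$ (standard gluing lemma, using $u\geq\Phi$ on $B$ and $u=\Phi$ on $\partial B$), continuous, and inherits the boundary conditions of $\Phi$ (since $B$ is compactly contained in $\Omega\times A$). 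Thus $\tilde{\Phi}\in\mathcal{G}$, so $\tilde{\Phi}\leq\Phi$, forcing $u\leq\Phi$ on $B$; combined with $u\geq\Phi$, we get $u=\Phi$ on $B$ and $(dd^{c}\Phi)^{n+1}=0$ there. Since $B$ is arbitrary, (iii) holds throughout $\Omega\times A$.

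The hard part will be (ii), i.e.\ continuity of $\Phi$ on $\bar{\Omega}\times\bar{A}$ together with attainment of the prescribed boundary values. Near $\partial\Omega$ one has the squeeze $\phi_{0}+\phi_{1}\leq\Phi\leq 0$ with both bounds tending to $0$ uniformly in $\zeta$, which gives continuity and the value $0$ on $\partial\Omega\times\bar{A}$. On $\bar{\Omega}\times\partial A$ one has to construct barriers: from below, subsolutions in $\mathcal{G}$ that realize $\phi_{0}$ (resp.\ $\phi_{1}$) as closely as desired at $|\zeta|=1$ (resp.\ $|\zeta|=e$) while still vanishing on $\partial\Omega$; from above, plurisuperharmonic (or pluriharmonic) majorants that control $\Phi$ from the other side. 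The key flexibility comes from the fact that $\log|\zeta|$ is pluriharmonic on the annulus $A$---so it can be added to any psh function without destroying pshness while shifting values on $\partial A$ in a controlled way---combined with the strictly psh defining function $\rho$ of $\Omega$, which can be used to enforce the lateral vanishing. Some approximation of $\phi_{0},\phi_{1}$ by smooth strictly psh functions with the right decay at $\partial\Omega$ is likely needed to make the subsolutions decay fast enough. Compatibility at the corners $\partial\Omega\times\partial A$ (where all prescribed data vanish simultaneously) prevents obstructions at the non-smooth part of $\partial(\Omega\times A)$, so an analogue of Walsh's continuity theorem applies and yields $\Phi=\Phi^{*}\in C^{0}(\bar{\Omega}\times\bar{A})$ with the prescribed boundary values, completing (ii).
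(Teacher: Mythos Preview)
Your approach is essentially the same as the paper's: Perron--Bremermann envelope, $S^{1}$-invariance of the competing family for (i), Walsh/Bremermann barrier arguments for continuity and boundary values (ii), and a balayage via the Bedford--Taylor solution on balls for (iii). One minor point of ordering: your balayage argument for (iii) tacitly uses that $\Phi$ is already continuous (to pose the Dirichlet problem on $B$ with $\Phi$ as boundary data and to ensure the glued $\tilde{\Phi}$ is continuous, hence lies in $\mathcal{G}$), so (ii) should logically precede (iii); the paper sidesteps this by invoking Choquet's lemma to write $\Phi^{*}$ as an increasing limit of continuous competitors, balayaging each of those, and passing to the limit via the Bedford--Taylor continuity of $\MA$ along increasing sequences.
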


\begin{proof}
The invariance by rotations (i) follows from the corresponding invariance property of the family $\mathcal{G}$. The continuity and boundary properties (ii) follow standard arguments which go back to Bremermann \cite{Bre59} and Walsh \cite{Wal69}.

The maximality property (iii) is a consequence of Bedford-Taylor's solution to the homogeneous complex Monge-Amp\`{e}re equation on balls, through a balayage procedure: by Choquet's lemma, the sup can be achieved along an increasing sequence which is maximal on an arbitrary ball $B \subset \Omega \times A$, one then concludes by using the continuity property of the complex Monge-Amp\`{e}re operator along increasing sequences \cite{BT82}.
\end{proof}

\begin{defi}
Set $\Phi_{t}(z)=\Phi(z,e^{t})$. The continuous family $(\Phi_{t})_{0 \leq t \leq 1}$ is called the geodesic
joining $\phi_{0}$ to $\phi_{1}$.
\end{defi}

Recall that
\begin{equation*}
  \mathcal{E}(\f):=\frac{1}{n+1} \int_{\Omega} \f (dd^{c} \f)^{n}
\end{equation*}
denotes the \emph{energy} of a plurisubharmonic function $\f$.

\begin{lem}
Let $(\Phi_{t})_{0 \leq t \leq 1}$ be a continuous geodesic. Then $t \mapsto \mathcal{E}(\Phi_{t})$ is affine.
\end{lem}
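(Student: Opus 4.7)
The plan is to interpret $E(\zeta) := \mathcal{E}(\Phi(\cdot,\zeta))$ as a function on the annulus $A$ and to prove that it is pluriharmonic on $A$. Since $\Phi$ is $S^1$-invariant in $\zeta$ by Proposition~\ref{prop:geodesic}(i), so is $E$, hence $E$ depends only on $t = \log\abs{\zeta}$. A pluriharmonic $S^1$-invariant function on an annulus is automatically affine in $\log\abs{\zeta}$, so this would yield exactly the claim.

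To show $dd^c_\zeta E \equiv 0$ on $A$, I would first reduce to a smooth setting. Since $\Phi$ is only continuous, I would approximate it by a sequence of $S^1$-invariant plurisubharmonic functions $\Phi^{\eps}$ on $\Omega \times A$ which are smooth up to the boundary and converge to $\Phi$ (for instance, by solving the homogeneous Dirichlet problem with slightly perturbed smooth boundary data, or by a Richberg-type sup-convolution argument adapted to preserve rotational invariance). Continuity of the complex Monge-Amp\`ere operator along monotone sequences of bounded plurisubharmonic functions, together with the stability of $\mathcal{E}$ on $\mathcal{E}^1(\Omega)$ established in \cite{BGZ09}, guarantees that the corresponding energies $E^{\eps}$ converge to $E$, so it is enough to prove the affineness of $E^{\eps}$.

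For a smooth $S^1$-invariant plurisubharmonic $\Psi$ on $\bar\Omega \times \bar A$ with $\Psi_{\vert \partial \Omega \times A} \equiv 0$, the first derivative formula recalled in subsection~\ref{subsec:properness} gives $\dot E(t) = \int_\Omega \dot\Psi_t \, (dd^{c}\Psi_t)^{n}$. Differentiating once more and integrating by parts, using that $\dot\Psi_t = 0$ on $\partial \Omega$ (since $\Psi$ vanishes there for every $\zeta$), yields
\begin{equation*}
\ddot E(t) = \int_{\Omega} \ddot\Psi_t \, (dd^{c}\Psi_t)^{n} - n \int_{\Omega} d\dot\Psi_t \wedge d^{c}\dot\Psi_t \wedge (dd^{c}\Psi_t)^{n-1}.
\end{equation*}
A direct inspection of the expansion of $(dd^{c}_{z,\zeta}\Psi)^{n+1}$ in the $S^1$-invariant case shows that this expression is, up to a positive constant and the factor $i\,d\zeta \wedge d\bar\zeta/\abs{\zeta}^{2}$, exactly the push-forward $\pi_{*}\bigl((dd^{c}_{z,\zeta}\Psi)^{n+1}\bigr)$ to $A$, where $\pi : \Omega \times A \to A$ is the projection. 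Applying this to $\Psi = \Phi^{\eps}$ and passing to the limit, property (iii) of Proposition~\ref{prop:geodesic} then forces $dd^{c}_\zeta E = 0$, hence $\ddot E = 0$ as required.

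The main obstacle I expect is the approximation step: $\Phi$ is merely continuous, while the computation above uses pointwise second $t$-derivatives and well-defined Monge-Amp\`ere powers $(dd^{c}\Phi_t)^{n}$. Choosing approximants that are simultaneously smooth, $S^1$-invariant, plurisubharmonic on $\Omega \times A$, and compatible with the boundary vanishing condition is delicate, as naive mollification destroys the Dirichlet condition on $\partial\Omega \times A$. The cleanest route is to exploit envelopes: replace $\phi_0,\phi_1$ by smooth plurisubharmonic approximants, solve the corresponding HMA envelope problem, and use the Bedford--Taylor stability results to pass to the limit in the Monge-Amp\`ere measures and hence in the energy.
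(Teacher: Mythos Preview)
Your strategy is exactly the paper's: establish the pushforward identity $dd^c_\zeta(\mathcal{E}\circ\Phi)=\pi_*\bigl((dd^c_{z,\zeta}\Phi)^{n+1}\bigr)$, read off harmonicity of $E$ on $A$ from Proposition~\ref{prop:geodesic}(iii), and then use the $S^1$-invariance (i) to conclude that $E$ is affine in $t=\log\abs{\zeta}$. The paper's proof is literally those two sentences, with the identity ``left to the reader'' for continuous plurisubharmonic $\Phi$.

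Where you diverge is in turning the justification of that identity into the main obstacle. This is where you overcomplicate things, and your ``cleanest route'' is in fact circular: solving the homogeneous Monge--Amp\`ere envelope with smoothed boundary data $\phi_i^\eps$ does \emph{not} produce a smooth $\Phi^\eps$---such envelopes are in general no better than $C^{1,1}$---so you would still be unable to carry out the pointwise second-derivative computation and would face the same regularity issue you started with. The point the paper is making is that no such detour is needed: both sides of the pushforward identity are well defined in the Bedford--Taylor sense for bounded plurisubharmonic $\Phi$ with $\Phi_{|\partial\Omega\times A}=0$, and the equality follows from the standard Stokes-type integration-by-parts calculus for Monge--Amp\`ere measures (the zero boundary values kill the boundary contributions). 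If you insist on approximating, take arbitrary smooth plurisubharmonic $\Phi^\eps\searrow\Phi$ by local regularisation, apply your smooth computation to each $\Phi^\eps$, and pass to the limit on both sides using Bedford--Taylor continuity; there is no need for the approximants to solve any auxiliary Dirichlet problem.
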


\begin{proof}
We let the reader verify that if $(z,\zeta) \mapsto \Phi(z,\zeta)$ is a continuous plurisubharmonic function in $\Omega \times A$, then
\begin{equation*}
  dd^{c}_{\zeta} \, \mathcal{E} \circ \Phi=\pi_*\left( (dd^{c}_{z,\zeta} \Phi)^{n+1} \right),
\end{equation*}
where $\pi:\Omega \times A \rightarrow A$ denotes the projection onto the second factor.

It thus follows from Proposition~\ref{prop:geodesic} that $\zeta \in A \mapsto \mathcal{E} \circ \Phi(\zeta) \in \RR$ is harmonic in $\zeta$.
The same proposition insures that it is also invariant by rotation, hence it is affine in $t=\log\abs{\zeta}$.
\end{proof}

% ----------------------------------------------------------------

\subsection{Variational characterization}

We now make an additional hypothesis of $S^1$-invariance in order to  use an important result by Berndtsson \cite{Ber06}. Namely we assume here below that $\Omega$ is {\it circled}, i.e.
\begin{center}
    $\Omega$ \emph{contains the origin and is invariant under the rotations} $z \mapsto e^{i\theta} z$,
\end{center}
and
\begin{center}
$\phi_0,\phi_1$ are $S^1$-invariant, i.e. $\phi_i(e^{i \theta} z)=\phi_i(z)$.
\end{center}
Under this assumption, it follows from \cite[Theorem 1.2]{Ber06} that
\begin{equation*}
  t \mapsto -\log \left( \int_{\Omega} e^{-\Phi_{t}} d\mu \right)
\end{equation*}
is a convex function of $t$ if $(\Phi_{t})$ is a continuous geodesic.

\begin{prop} \label{prop:variational}
Assume $\Omega$ is circled and let $\f$ be a $S^1$-invariant solution of $(MA)$. Then
\begin{equation*}
    \mathcal{F}(\f) \geq \mathcal{F}(\psi),
\end{equation*}
for all $S^1$-invariant plurisubharmonic functions $\psi$ in $\Omega$ which are continuous up to the boundary, with zero boundary values.
\end{prop}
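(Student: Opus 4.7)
The plan is to join $\f$ and $\psi$ by a continuous geodesic $(\Phi_t)_{0\leq t\leq 1}$, show that $h(t):=\mathcal{F}(\Phi_t)$ is concave on $[0,1]$, and argue that $h'(0^+)\leq 0$, which by concavity forces $\mathcal{F}(\psi)=h(1)\leq h(0)=\mathcal{F}(\f)$. Existence of such a geodesic with $\Phi_0=\f$ and $\Phi_1=\psi$ is Proposition~\ref{prop:geodesic}; since $\Omega$ is circled and $\f,\psi$ are $S^1$-invariant, the supremum defining $\Phi$ is taken over an $S^1$-invariant family of competitors, so $\Phi_t$ is itself $S^1$-invariant in $z$ and the hypothesis of Berndtsson's convexity theorem is met.

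For the concavity step, the previous lemma gives that $t\mapsto\mathcal{E}(\Phi_t)$ is affine, while \cite[Theorem 1.2]{Ber06}, applied to the $S^1$-invariant plurisubharmonic function $\Phi$ on $\Omega\times A$, shows that $t\mapsto -\log\int_{\Omega}e^{-\Phi_t}\,d\mu$ is convex in $t$. Hence $h(t)=\mathcal{E}(\Phi_t)+\log\int_{\Omega}e^{-\Phi_t}\,d\mu$ is the sum of an affine and a concave function, and is itself concave on $[0,1]$.

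For the derivative step, formally one expects
\[ h'(0)=\int_{\Omega}\dot\Phi_0\Bigl[(dd^{c}\f)^{n}-\frac{e^{-\f}\,\mu}{\int_{\Omega}e^{-\f}\,d\mu}\Bigr]=0, \]
since $\f$ solves $(MA)$, making the bracketed signed measure vanish identically. To justify this for a merely continuous geodesic I would exploit that $t\mapsto\Phi_t(z)$ is convex in $t=\log\abs{\zeta}$ (as the restriction of an $S^1$-invariant psh function), hence locally Lipschitz in $t$ with an almost everywhere right-derivative $\dot\Phi_0^+$; then differentiating $\log\int_{\Omega}e^{-\Phi_t}\,d\mu$ under the integral and differentiating $\mathcal{E}(\Phi_t)$ at $t=0$ against the smooth background measure $(dd^{c}\f)^{n}$ produce the two integrals above, which cancel. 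Concavity then gives $(h(t)-h(0))/t\leq h'(0^+)\leq 0$ for $t>0$, and specializing to $t=1$ yields the proposition.

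The main obstacle is rigorously executing the derivative step along a merely continuous (rather than $C^{1,1}$ or smooth) geodesic. A clean bypass is to approximate $\psi$ by a decreasing sequence of smooth, strictly plurisubharmonic test functions $\psi^{\eps}$ with zero boundary values, run the geodesic argument for each $\psi^{\eps}$ using the improved regularity at the endpoints, and pass to the limit using continuity of $\mathcal{F}$ on the finite-energy class $\mathcal{E}^1(\Omega)$ of \cite{BGZ09}; alternatively the first variation formulas developed in \cite{BBGZ09,BB11} are designed to handle potentials with exactly this limited regularity and can be invoked directly.
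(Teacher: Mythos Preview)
Your proposal is correct and follows essentially the same route as the paper: construct the continuous geodesic from $\f$ to $\psi$, use affinity of $\mathcal{E}$ together with Berndtsson's convexity theorem to get concavity of $t\mapsto\mathcal{F}(\Phi_t)$, and check that the derivative at $t=0$ vanishes because $\f$ solves $(MA)$, referring to \cite{BBGZ09} for the justification in the non-smooth case. Your explicit remark that the $S^1$-invariance of $\f,\psi$ forces the geodesic $\Phi_t$ to be $S^1$-invariant in $z$ (needed to apply \cite{Ber06}) is a point the paper leaves implicit.
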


\begin{proof}
Let $(\Phi_{t})_{0 \leq t \leq 1}$ denote the \emph{geodesic} joining $\phi_{0}:=\f$ to $\phi_{1}:=\p$. It follows from the above mentioned work of Berndtsson \cite{Ber06} that
\begin{equation*}
  t \mapsto -\log \left( \int e^{-\Phi_{t}} d \mu \right)
\end{equation*}
is convex, while we have just observed that
\begin{equation*}
  t \mapsto \mathcal{E}(\Phi_{t})
\end{equation*}
is affine, thus
\begin{equation*}
  t \mapsto \mathcal{F}(\Phi_{t}) \text{ is concave.}
\end{equation*}

It therefore suffices to show that the derivative of $\mathcal{F}(\Phi_{t})$ at $t=0$ is non positive to conclude that
$\mathcal{F}(\f)=\mathcal{F}(\Phi_{0}) \geq \mathcal{F}(\Phi_{t})$ for all $t$, in particular
at $t=1$ where it yields $\mathcal{F}(\f) \geq \mathcal{F}(\p)$.
When $t \mapsto \Phi_{t}$ is smooth, a direct computation yields, for $t=0$,
\begin{equation*}
  \frac{d}{dt}(\mathcal{F}(\Phi_{t}))=\int_{\Omega} \dot{\Phi_{t}} \left[ (dd^{c} \Phi_{t})^{n}-e^{-\Phi_{t}}\mu/(\int e^{-\Phi_{t}}d \mu) \right]=0
\end{equation*}
since $\Phi_{0}=\f$ is a solution of $(MA)$. For the general case, one can argue as in the proof of
Theorem 6.6 in \cite{BBGZ09}.
\end{proof}

\begin{cor} \label{cor:variational}
A smooth $S^1$-invariant plurisubharmonic function $\f:\bar{\Omega} \rightarrow \RR$
with zero boundary values is a solution of $(MA)$, i.e. satisfies
\begin{equation*}
  (dd^{c} \f)^{n}=\frac{e^{-\f} \mu}{\int_{\Omega} e^{-\f} d \mu} \quad \text{in} \quad \Omega
\end{equation*}
if and only if it maximizes the functional $\mathcal{F}$.
\end{cor}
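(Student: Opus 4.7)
The statement is an ``if and only if'', so I would split it into two implications and handle them separately; only the ``solution $\Rightarrow$ maximizer'' direction requires the $S^{1}$-invariance, while the reverse implication is a standard first-variation argument that merely has to be adapted to the invariant class.

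For the direction ``$\f$ maximizer $\Rightarrow$ $\f$ solves $(MA)$'', I would argue as in the preamble of section~\ref{sec:variational_characterization}: for any smooth function $v$ with compact support in $\Omega$, consider the family $\f+tv$ (which remains smooth, strictly plurisubharmonic and with zero boundary values for $\abs{t}$ small). The key computation is
\begin{equation*}
  \frac{d}{dt}\mathcal{F}(\f+tv)_{|t=0}=\int_{\Omega} v\,(dd^{c}\f)^{n} - \frac{\int_{\Omega} v\,e^{-\f}d\mu}{\int_{\Omega}e^{-\f}d\mu}.
\end{equation*}
The only subtlety is that the hypothesis of maximality is most naturally interpreted inside the $S^{1}$-invariant class (this is where Berndtsson's convexity lives). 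To bridge this gap I would average: given an arbitrary test $v$, set $\bar{v}(z):=\frac{1}{2\pi}\int_{0}^{2\pi}v(e^{i\theta}z)\,d\theta$, which is $S^{1}$-invariant and still compactly supported. Both measures $(dd^{c}\f)^{n}$ and $e^{-\f}\mu$ are $S^{1}$-invariant (since $\f$ is invariant and $\mu$ is invariant under the diagonal rotation), so $\int v\,(dd^{c}\f)^{n}=\int\bar{v}\,(dd^{c}\f)^{n}$ and similarly for the other integral. Thus vanishing of the derivative for all $S^{1}$-invariant $\bar{v}$ forces the Euler--Lagrange identity to hold when tested against any $v$, which by smoothness gives the pointwise equation $(dd^{c}\f)^{n}=e^{-\f}\mu/\int_{\Omega}e^{-\f}d\mu$.

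The reverse direction ``$\f$ solves $(MA)$ $\Rightarrow$ $\f$ maximizer'' is an immediate citation of Proposition~\ref{prop:variational}: that proposition states exactly that any $S^{1}$-invariant solution $\f$ of $(MA)$ satisfies $\mathcal{F}(\f)\geq \mathcal{F}(\p)$ for every $S^{1}$-invariant plurisubharmonic competitor $\p$ with zero boundary values and continuous up to the boundary, which is the definition of maximizing $\mathcal{F}$ on the relevant class.

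There is no real obstacle here; the only place that requires a little care is the averaging step in the forward implication, and this works precisely because $\Omega$ is assumed circled (so that $S^{1}$ acts on $\Omega$ and preserves both Lebesgue measure and the class of admissible test functions). Everything else is formal, the heavy lifting having been done in Proposition~\ref{prop:variational} via Berndtsson's convexity result \cite{Ber06} and the affineness of the energy along geodesics.
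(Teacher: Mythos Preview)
Your proposal is correct and follows the paper's approach: the ``solution $\Rightarrow$ maximizer'' direction is precisely Proposition~\ref{prop:variational}, while the ``maximizer $\Rightarrow$ solution'' direction is the first-variation computation recorded at the opening of section~\ref{sec:variational_characterization}. Your averaging step, reducing arbitrary test functions to $S^{1}$-invariant ones via the circled hypothesis, is a point of rigor the paper leaves implicit.
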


%----------------------------------------------------------

\subsection{Uniqueness of solutions}

The purpose of this section is to establish a uniqueness result for $(MA)$. Recall that if $\varphi$ is a solution of $(MA)$, we say that $\Omega$ is strictly $\varphi$-convex if $\Omega$ is strictly convex for the metric $dd^{c}\varphi$.

\begin{thm}
Assume that $\Omega$ is circled and strictly $\f$-convex, where $\f$ is a
$S^1$-invariant solution of $(MA)$.
Then $\f$ is the only $S^1$-invariant solution to $(MA)$.
\end{thm}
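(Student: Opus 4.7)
The plan is to follow the variational strategy outlined in the introduction. Let $\p$ be a second smooth $S^{1}$-invariant solution of $(MA)$. By Proposition~\ref{prop:geodesic} there exists a continuous $S^{1}$-invariant geodesic $(\Phi_{t})_{0\le t\le 1}$ with $\Phi_{0}=\f$ and $\Phi_{1}=\p$, and by Corollary~\ref{cor:variational} both $\f$ and $\p$ are $S^{1}$-invariant maximizers of $\mathcal{F}$; in particular $\mathcal{F}(\f)=\mathcal{F}(\p)$.

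The heart of the argument is a concavity-rigidity computation. By the lemma of Section~\ref{subsec:continuous_geodesics}, the energy $t\mapsto\mathcal{E}(\Phi_{t})$ is affine along the geodesic, and Berndtsson's theorem \cite{Ber06} (applicable because $\Omega$ is circled and $(\Phi_{t})$ is $S^{1}$-invariant) makes $t\mapsto -\log\int_{\Omega}e^{-\Phi_{t}}d\mu$ convex. Therefore $t\mapsto\mathcal{F}(\Phi_{t})$ is concave, and since it attains its maximum at both endpoints it must be constant on $[0,1]$.

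Assuming momentarily that $(\Phi_{t})$ is smooth, set $u:=\dot{\Phi}_{0}$, which vanishes on $\partial\Omega$. Differentiating $\frac{d}{dt}\mathcal{F}(\Phi_{t})\equiv 0$ at $t=0$ and combining the Mabuchi geodesic equation $\ddot{\Phi}_{0}(dd^{c}\f)^{n}=n\, du\wedge d^{c}u\wedge(dd^{c}\f)^{n-1}$ with the fact that $(dd^{c}\f)^{n}=d\mu_{0}:=e^{-\f}\mu/\int e^{-\f}d\mu$ is a probability measure, Stokes' theorem produces the elliptic identity
\begin{equation*}
   \int_{\Omega}|\nabla u|^{2}_{\f}\,d\mu_{0}=\int_{\Omega}u^{2}\,d\mu_{0}-\left(\int_{\Omega}u\,d\mu_{0}\right)^{2}.
\end{equation*}
By \cite[Corollary 1.2]{GKY11}, the strict $\f$-convexity of $\Omega$ together with $\Ric(\omega_{\f})=\pi\omega_{\f}>0$ forces the first Dirichlet eigenvalue $\lambda_{1}$ of $-\Delta_{\f}$ to satisfy $\lambda_{1}>1$. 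The Poincar\'e--Dirichlet inequality then yields $\int_{\Omega}|\nabla u|^{2}_{\f}d\mu_{0}\ge\lambda_{1}\int_{\Omega}u^{2}d\mu_{0}>\int_{\Omega}u^{2}d\mu_{0}\ge\int_{\Omega}u^{2}d\mu_{0}-\bigl(\int_{\Omega}u\,d\mu_{0}\bigr)^{2}$ whenever $u\not\equiv 0$, contradicting the identity above. Hence $u\equiv 0$, which by uniqueness of the geodesic with given initial data forces $\Phi_{t}\equiv\f$, and thus $\p=\f$.

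The principal technical difficulty, as foreshadowed in the introduction, is that the geodesic $(\Phi_{t})$ is only continuous, so the formal derivative computation above is not directly justified. I would address this along the lines of \cite{BBGZ09}: replace $(\Phi_{t})$ by smooth $\eps$-geodesics obtained by solving a uniformly elliptic relaxation of the homogeneous complex Monge-Amp\`ere equation $(dd_{z,\zeta}^{c}\Phi)^{n+1}=0$, derive the elliptic identity at the $\eps$-level, and pass to the limit using the uniform $C^{0}$ and energy bounds available from Section~\ref{sec:higher_order_estimates}. This analytical step is the hard part; the algebraic skeleton of the argument is as described above.
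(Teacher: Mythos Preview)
Your overall architecture matches the paper's: constancy of $\mathcal{F}$ along the geodesic, an elliptic identity at $t=0$, and the spectral gap $\lambda_{1}>1$ from \cite{GKY11} to force $\dot{\Phi}_{0}=0$. Your derivation of the integral identity via the second variation and the geodesic equation $\ddot{\Phi}_{0}(dd^{c}\f)^{n}=n\,du\wedge d^{c}u\wedge(dd^{c}\f)^{n-1}$ is a legitimate variant; the paper instead observes that constancy of $\mathcal{F}$ forces each $\Phi_{t}$ to be a maximizer, hence a solution of $(MA)$, and then differentiates $(MA)$ in $t$ to obtain the \emph{pointwise} linearized equation $-\Delta_{\f}\dot{\Phi}_{0}=\dot{\Phi}_{0}-c(\dot{\Phi}_{0})$. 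Pairing that with $\dot{\Phi}_{0}$ recovers exactly your integral identity, so the two routes coincide at this stage.

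There is, however, a genuine gap in your passage from $\dot{\Phi}_{0}=0$ to $\Phi_{t}\equiv\f$. You invoke ``uniqueness of the geodesic with given initial data'', but the Cauchy problem for Mabuchi geodesics is \emph{not} the well-posed one: geodesics in the space of K\"{a}hler potentials are produced and characterized via the boundary value problem (Proposition~\ref{prop:geodesic}), not an initial value problem, and no uniqueness theorem for $(\Phi_{0},\dot{\Phi}_{0})$-data is available---even for smooth geodesics, let alone for the continuous envelope or for $\eps$-geodesics. This step does not follow and your proposed $\eps$-regularization does not rescue it.

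The paper closes this gap by a different, more robust mechanism that you should incorporate. From $\dot{\Phi}_{0}=0$ one gets that the affine function $t\mapsto\mathcal{E}(\Phi_{t})$ has vanishing derivative at $0$, hence is constant; since $\mathcal{F}$ is also constant, $t\mapsto\log\int_{\Omega}e^{-\Phi_{t}}d\mu$ is constant and thus $\int_{\Omega}\dot{\Phi}_{t}\,e^{-\Phi_{t}}d\mu=0$ for all $t$. Now use a property of the geodesic you did not exploit: for each fixed $z$, the map $\zeta\mapsto\Phi(z,\zeta)$ is subharmonic and $S^{1}$-invariant on the annulus, so $t\mapsto\Phi_{t}(z)$ is \emph{convex}. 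Combined with $\dot{\Phi}_{0}=0$ this gives $\dot{\Phi}_{t}\geq 0$, and the vanishing integral against the strictly positive density $e^{-\Phi_{t}}$ forces $\dot{\Phi}_{t}=0$ almost everywhere, hence $\Phi_{0}=\Phi_{1}$. This argument needs no Cauchy uniqueness and survives the low regularity of the geodesic.
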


\begin{proof}
Assume we are given $\f,\p$ two $S^1$-invariant solutions of $(MA)$. Let
 $(\Phi_{t})_{0 \leq t \leq 1}$ denote the continuous geodesic joining $\phi_{0}=\f$ to $\phi_{1}=\p$. Since the functional
$\mathcal{F}$ is concave along this geodesic and attains its maximum both at $\phi_{0}$ and $\phi_{1}$,
it is actually constant, hence each $\Phi_{t}$ is a $S^1$-invariant solution to $(MA)$ by Corollary~\ref{cor:variational}, so that
\begin{equation*}
  (dd^{c} \Phi_{t})^{n}=\frac{e^{-\Phi_{t}} \mu}{\int_{\Omega} e^{-\Phi_{t}} d\mu} \; \; \text{ in } \; \Omega.
\end{equation*}

Assume that the mapping $(z,t) \in \Omega \times A \mapsto \Phi_{t}(z) \in \RR$ is smooth.
Taking derivatives with respect to $t$, we infer
\begin{equation*}
   n \, dd^{c} \dot{\Phi_{t}} \wedge (dd^{c} \Phi_{t})^{n-1}=\left[-\dot{\Phi_{t}} +\int_{\Omega} \dot{\Phi_{t}} (dd^{c} \Phi_{t})^{n} \right] (dd^{c} \Phi_{t})^{n},
\end{equation*}
so that $1$ is an eigenvalue with eigenvector $\dot{\Phi_{t}}-\int_{\Omega} \dot{\Phi_{t}} (dd^{c} \Phi_{t})^{n} $ for the Laplacian $\Delta_{t}$ associated to the K\"{a}hler form $dd^{c} \Phi_{t}$. Without the regularity assumption,
we can take derivatives in the sense of distributions to insure that at $t=0$,
\begin{equation*}
   n \, dd^{c} \dot{\Phi_{0}} \wedge (dd^{c} \Phi_{0})^{n-1}=\left[-\dot{\Phi_{0}} + \int_{\Omega} \dot{\Phi_{0}} (dd^{c} \Phi_{0})^{n} \right] (dd^{c} \Phi_{0})^{n},
\end{equation*}
as in the proof of Theorem 6.8 in \cite{BBGZ09}. Note that $\Phi_{0}=\f$ is smooth. In particular $\dot{\Phi_{0}}$ is solution of
\begin{equation}\label{equ:elliptic_equation}
  -\triangle\psi = \psi - c(\psi) \; \text{ in } \; \Omega \quad \text{with} \quad {\psi}_{|\partial \Omega}=0,
\end{equation}
where
\begin{equation*}
  c(\psi) = \int_\Omega \psi (dd^{c}\varphi )^{n}.
\end{equation*}

We are going to show that any solution of equation \eqref{equ:elliptic_equation} has to vanish identically if $\Omega$ is strictly $\varphi$-convex. Namely, assume first that $c(\psi)\geq 0$. Write $\psi = \psi^{+} - \psi^{-}$, where $\psi^{+}=\max{\{\psi ,0\}}$ and $\psi^{-}=\max{\{-\psi , 0\}}.$ Multiplying equation \eqref{equ:elliptic_equation} by $\psi^{+}$ and integrating by parts, we get
\begin{equation*}
\begin{split}
    \int_{\Omega} \abs{d\psi^{+}}^{2} (dd^{c}\varphi )^{n} & = \int_{\Omega} (\psi^{+} )^{2} (dd^{c}\varphi )^{n} - c(\psi)\int_\Omega \psi^{+} (dd^{c}\varphi )^{n} \\
     & \leq \int_\Omega (\psi^{+})^{2} (dd^{c}\varphi )^{n}.
 \end{split}
\end{equation*}

By the variational characterization of the first eigenvalue of the Laplacian, if $\psi^{+}$ doesn't vanish identically, then the last inequality means that the first eigenvalue of $\Delta$ with Dirichlet boundary condition is at most $1$. However, by \cite{GKY11}[Corollary 1.2], we know that this eigenvalue is strictly bigger than $1$ because of the strict convexity condition\footnote{Due to our normalization convention for $d^{c}$, there is a $\pi$ factor difference between the definition of $\Delta$ in our present work and the one in \cite{GKY11}.}. This shows that $\psi^{+}=0$ and therefore $\psi=0$ because $c(\psi)\geq 0$. If $c(\psi)\leq 0$, the reasoning is similar and $\psi=0$ as well.

As a conclusion, we see that $\dot{\Phi_{0}} = 0$ on $\Omega$. Therefore, since the energy
\begin{equation*}
  t \mapsto \mathcal{E}(\Phi_{t})
\end{equation*}
is affine along the geodesic, and its derivative at $t=0$ vanishes, it is constant on the interval $[0,1]$. Now, along the geodesic, the derivative of $\mathcal{F}$ vanishes and since
\begin{equation*}
  \mathcal{F}(\Phi_{t}) = \mathcal{E}(\Phi_{t}) + \log \left( \int e^{-\Phi_{t}} d \mu \right),
\end{equation*}
we obtain finally that
\begin{equation*}
  \int \dot{\Phi_{t}} e^{-\Phi_{t}} d \mu = 0.
\end{equation*}
But $\dot{\Phi_{t}} \geq 0$ since $t \mapsto \Phi_{t}$ is convex (by subharmonicity and $S^1$-invariance)
and therefore $\dot{\Phi_{t}} = 0$ almost everywhere. This leads to $\Phi_{0} = \Phi_{1}$.
\end{proof}

% ----------------------------------------------------------------
% ----------------------------------------------------------------

\section{Concluding remarks}
\label{sec:conclusion}

% ----------------------------------------------------------------

\subsection{The continuity method}
\label{subsec:continuity_method}

A classical strategy to solve $(MA)$ is to use
the \emph{continuity method}, looking at a continuous family of similar
Dirichlet problems,
\begin{equation*}
(MA)_{t} \hskip2cm
(dd^{c} \f_{t})^{n}=\frac{e^{-t\f_{t}} \mu}{\int_{\Omega} e^{-t \f_{t}} d \mu}
\; \text{ in } \Omega
\hskip.4cm
\text{ with }
\hskip.4cm
{\f_{t}}_{|\partial \Omega}=0,
\end{equation*}
where the parameter $t$ runs from $0$ to $1$. One sets
\begin{equation*}
  I:=\set{t \in [0,1] \, / \, (MA)_{t} \text{ admits a (smooth plurisubharmonic) solution}}
\end{equation*}
and then tries to show that $I$ is non empty, open and closed, so that $I=[0,1]$. Observe that
$1 \in I$ is equivalent to solving the Dirichlet problem $(MA)=(MA)_{1}$.

\medskip

It follows from the work of Cafarelli-Kohn-Nirenberg-Spruck \cite{CKNS85} that $0 \in I$, hence the latter is non empty (see the discussion in section~\ref{subsec:nonpositive}).

The a priori estimates derived in section~\ref{sec:higher_order_estimates} can be adapted to show that $I$ is closed. This is in general the most difficult part of the method. It however turns out  here that proving the openness is a delicate issue. Indeed, to do so, we need to show that the linearized $(MA)_{t}$ equation has a trivial kernel. More precisely, we have to prove that if $\varphi_{t}$ is a solution of $(MA)_{t}$, then every solution\footnote{In the following, covariant derivative, Ricci tensor and Laplacian referred to the metric defined by $\varphi_{t}$.} of
\begin{equation}\label{equ:linearized_MA}
  -\triangle\psi - t\psi + tc(\psi) = 0 \; \text{ in } \; \Omega \quad \text{with} \quad {\psi}_{|\partial \Omega}=0,
\end{equation}
where
\begin{equation*}
  c(\psi) := \int \psi (dd^{c} \f_{t})^{n}
\end{equation*}
must vanish. Let's introduce the differential operator
\begin{equation*}
  D : C^{\infty}(\Lambda^{0,1}\Omega) \to C^{\infty}(\Lambda^{0,1}\Omega \otimes\Lambda^{0,1}\Omega)
\end{equation*}
defined by
\begin{equation*}
  D\alpha := \nabla^{0,1}\alpha.
\end{equation*}
We have have then a \emph{Bochner formula} (up to an inessential multiplicative $\pi$ factor which we omit for brevity~\footnote{In the following computation $\triangle$ is the $\dbar$-Laplacian.})
\begin{equation}\label{eq:Bochner}
  -\triangle\alpha = D^{*}D\alpha + \Ric(\alpha), \qquad \alpha \in C^{\infty}(\Lambda^{0,1}\Omega).
\end{equation}
Applying \eqref{eq:Bochner} to $\dbar\psi$ where $\psi$ is a solution of \eqref{equ:linearized_MA}, we get
\begin{equation*}
  -\triangle \dbar \psi = t \dbar\psi =D^{*}D\dbar\psi + t \dbar\psi
\end{equation*}
because $\triangle$ and $\dbar$ commute and $\Ric(\alpha)=t\alpha$. Therefore
\begin{equation}\label{eq:integrant}
  D^{*}D\dbar\psi = 0.
\end{equation}
Then, taking the $L^{2}$ inner product of $D^{*}D\dbar\psi$ and $\dbar\psi$ and integrating by parts, without neglecting boundary terms (see \cite{GKY11} for details) and using the fact that on the boundary we have
\begin{equation*}
  \triangle\psi = tc(\psi),
\end{equation*}
we obtain
\begin{equation}\label{eq:Hessian_kernel}
   \norm{D\dbar\psi}_{L^{2}}^{2} = -\frac{1}{2} \int_{\partial \Omega} (n \cdot \psi)^{2}\Big[ \tr L_{\rho} + \Hess \rho (Jn,Jn) \Big] \, \sigma.
\end{equation}
where $\rho$ is a boundary defining function for $\partial \Omega$, $n$ is the outward unit normal vector field on
$\partial \Omega$ and $L_{\rho}$ is the \emph{Levi form} corresponding to $\rho$ (see section~\ref{subsec:clevi}).

If $\Omega$ is a strictly pseudoconvex domain then $\tr L_{\rho}$ is positive at each point of $\partial \Omega$, however we do not have \textit{a priori} any control on $\Hess \rho (Jn,Jn)$. So, contrary to what happens on a closed manifold where we do not have to deal with this disturbing boundary term, we cannot conclude here.

\begin{rem}
In the same spirit, we have shown in \cite{GKY11} that a ball of sufficiently large radius in complex projective space provides
an example of a strongly pseudoconvex domain which is not convex, and for which the \emph{Lichnerowicz estimate} fails.
\end{rem}

% ----------------------------------------------------------------

\subsection{Optimal constants}
\label{subsec:optimal_constant}

It is natural to wonder whether it is possible to solve
\begin{equation*}
(MA)_{t} \hskip2cm
(dd^{c} \f_{t})^{n}=\frac{e^{-t\f_{t}} \mu}{\int_{\Omega} e^{-t \f_{t}} d \mu}
\; \text{ in } \Omega
\hskip.4cm
\text{ with }
\hskip.4cm
{\f_{t}}_{|\partial \Omega}=0,
\end{equation*}
for
bigger values of $t>1$. As noticed in Remark~\ref{rem:biggervalues}, our Moser-Trudinger inequality
allows us to get control on slightly larger values of $t$, with a maximal value depending on $n$,
namely
\begin{equation*}
t<(2n)^{1+1/n}(1+1/n)^{(1+1/n)}.
\end{equation*}

It should be noticed that one can not expect to solve $(MA)_{t}$ for big values of $t$,
as follows from Bishop's volume comparison theorem.
Indeed, let $\mathbb{B}$ denote the unit ball in $\CC^{n}$. If we can find a solution $\f$ of $(MA)_{t}$ on $\mathbb{B}$, this means that we can find a K\"{a}hler-Einstein metric $\omega =dd^{c}\f$ on $\mathbb{B}$ satisfying $\Ric{(\omega )}=t\pi \omega$. Moreover, the volume $V$ of this metric is
\begin{equation*}
  V=\int_{\mathbb{B}} \frac{(dd^{c}\f )^{n}}{n!} = \frac{1}{n!}.
\end{equation*}
But by the Bishop volume comparison theorem, the volume has to be less than or equal to the volume of the $2n$-real dimensional sphere endowed with a metric of constant curvature $k$, with $k=(t\pi )/(2n-1)$. This implies that
\begin{equation*}
  \frac{1}{n!}\leq \frac{(4\pi )^{n}(n-1)!}{k^{n}(2n-1)!},
\end{equation*}
so that
\begin{equation*}
  t\leq 4(2n-1)\left[ \frac{(n-1)! n!}{(2n-1)!}\right]^{1/n}.
\end{equation*}
The interested reader will find in \cite{BB11} further motivation and references about $(MA)_{t}$ for large (critical) values of $t$.

% ----------------------------------------------------------------
% ----------------------------------------------------------------

\end{document}